\theoremstyle{plain}
\newtheorem{lemma}{Lemma}[section]
\newtheorem{theorem}[lemma]{Theorem}
\newtheorem{proposition}[lemma]{Proposition}
\newtheorem{corollary}[lemma]{Corollary}
\theoremstyle{definition}
\newtheorem{definition}[lemma]{Definition}
\newtheorem{remark}[lemma]{Remark}
\numberwithin{equation}{section}
\newcommand{\R}{\mathbb{R}}
\newcommand{\N}{\mathbb{N}}
\newcommand{\Q}{\mathbb{Q}}
\newcommand{\supp}{\text{\rm supp}}
\newcommand{\loc}{\text{\rm loc}}
\newcommand{\gr}{\textrm{graph}}
\newcommand{\ve}{\varepsilon}
\newcommand{\erre}{\mathbb{R}}
\newcommand{\cI}{\mathcal{I}}
\newcommand{\f}{\varphi}
\newcommand{\T}{\mathcal{T}}
\renewcommand{\r}{\varrho}
\renewcommand{\L}{\mathcal{L}}
\newcommand{\RCD}{\mathsf{RCD}}
\newcommand{\CD}{\mathsf{CD}}
\newcommand{\Geo}{{\rm Geo}}
\newcommand{\mm}{\mathfrak m}
\newcommand{\qq}{\mathfrak q}
\newcommand{\QQ}{\mathfrak Q}
\newcommand{\Ric}{{\rm Ric}}
\newcommand{\Pe}{\mathsf{P}}
\newcommand{\sfd}{\mathsf d}
\newcommand{\Opt}{\mathrm{OptGeo}}
\begin{document}

\title[Almost euclidean Isoperimetric inequalities under local  Ricci lower bounds]
 {Almost euclidean Isoperimetric Inequalities in spaces satisfying local  Ricci curvature lower bounds} %
\author{Fabio Cavalletti}\thanks{F. Cavalletti: SISSA-ISAS, email: cavallet@sissa.it} 
\author{ Andrea Mondino} \thanks{A. Mondino: University of Warwick, Mathematics Institute, email: A.Mondino@warwick.ac.uk} 
%
%\address{Centro de Giorgi - SNS}
%\email{fabio.cavalletti@sns.it}

%\keywords{optimal transport; existence of maps; uniqueness of maps; measure contraction property}
%

\bibliographystyle{plain}

\begin{abstract}
Motivated by Perelman's  Pseudo Locality Theorem for  the Ricci flow, we prove that if a Riemannian manifold has Ricci curvature bounded below in a metric ball which moreover has almost maximal volume, then in a smaller ball (in a quantified sense) it holds an almost-euclidean isoperimetric inequality.
\\The result is actually established in the more general framework of non-smooth spaces satisfying local Ricci curvature lower bounds in a synthetic sense via optimal transportation.
\end{abstract}

\maketitle
%\tableofcontents

%%%%%%%%%%%%%%%%%%%%%%%%%%%%%%%%%%%%%%%%%%%%%%%%%%
%%%%%%%%%%%%%%%%%%%%%%%%%%%%%%%%%%%%%%%%%%%%%%%%%%
%%%%%%%%%%%%%%%%%%%%%%%%%%%%%%%%%%%%%%%%%%%%%%%%%%
%%%%%%%%%%%%%%%%%%%%%%%%%%%%%%%%%%%%%%%%%%%%%%%%%%

\section{Introduction}

Let ${\mathbb M}^{N}_{K/(N-1)}$ be the (unique, up to isometries) simply connected manifold of constant sectional curvature equal to $K/(N-1)$; 
denote by $\textrm{Vol}_{K,N} (r)$  the volume of a metric ball of radius $r$ in  ${\mathbb M}^{N}_{K/(N-1)}$. % and set  $\bar{r}_{K,N}$ to be the radius of the metric ball of unit volume:  $\textrm{Vol}_{K,N}({\bar{r}_{K,N}})=1$.
\\ Given a smooth Riemannian manifold $(M,g)$ we denote with $\Ric_{g}$ the Ricci tensor, and with  ${\rm Vol}_{g}$ the  standard Riemannian volume measure.
\\The goal of this paper  is to prove the next almost euclidean isoperimetric inequality.
%\begin{theorem}\label{thm:MainSmooth}
%For every $K\in \R, N\in [2,\infty)\cap \N$ there exist $\bar{\varepsilon}_{K,N}, \bar{\eta}_{K,N}, \bar{\delta}_{K,N}, C_{K,N}>0$ such that the next statement is satisfied.
%Let $(M, g)$ be a smooth $N$-dimensional Riemannian manifold and let $\bar{x}\in M$. Assume that for some $\varepsilon\in [0,\bar{\varepsilon}_{K,N}],  \eta \in [0,\bar{\eta}_{K,N}]$ it holds:
%\begin{enumerate}
%\item  ${\rm Vol}_{g}(B_{\bar{r}_{K,N}}(\bar{x})) \geq 1-\eta$,
%\item    $\Ric_{g}\geq (K-\varepsilon)  g$ on  $B_{2 \bar{r}_{K,N}}(\bar{x})$.
%\end{enumerate}
%Let $\delta \in (0,\bar{\delta}_{K,N}]$. Then for every finite perimeter subset $E \subset B_{\delta}(\bar{x})$ the following almost euclidean isoperimetric inequality holds:
% \begin{equation}\label{eq:AEIIsmooth}
% \Pe(E) \geq N \omega_{N}^{1/N} (1-C_{K,N} (\delta+\eta+\varepsilon) ) \; {\rm Vol}_{g}(E)^{\frac{N-1}{N}},
% \end{equation}
%  where $\Pe(E)$ is the perimeter of $E$.
%\end{theorem}

%Specializing Theorem \ref{thm:MainSmooth} to $K=0$, by a standard scaling argument (if $r<\bar{r}_{K,N}$, and by a direct application of Bishop-Gromov inequality for $r>\bar{r}_{K,N}$ ), we get the next corollary which was one of  the main motivations for writing this paper.  
\begin{theorem}\label{thm:MainSmooth}
For every $K\in \R, N\in [2,\infty)\cap \N$ there exist $\bar{\varepsilon}_{K,N}, \bar{\eta}_{K,N}, \bar{\delta}_{K,N}, C_{K,N}>0$ such that the next statement is satisfied.
Let $(M, g)$ be a smooth $N$-dimensional Riemannian manifold and let $\bar{x}\in M$. Assume that $B_{1}(\bar{x})$ is relatively compact and that  for some $\varepsilon\in [0,\bar{\varepsilon}_{K,N}],  \eta \in [0,\bar{\eta}_{K,N}]$ it holds:
\begin{enumerate}
\item  ${\rm Vol}_{g}(B_{1}(\bar{x})) \geq (1-\eta) {\rm Vol}_{K,N} (1)$,
\item %the closure of the ball $B_{2\bar{r}_{K,N}}(\bar{x})$ is compact and  
$\Ric_{g}\geq (K-\varepsilon)   g$ on  $B_{1}(\bar{x})$.
\end{enumerate}
Let $\delta \in (0, \bar{\delta}_{K,N}  ]$. Then for every finite perimeter subset $E \subset B_{\delta}(\bar{x})$ the following almost euclidean isoperimetric inequality holds:
 \begin{equation}\label{eq:AEIIsmooth}
 \Pe(E) \geq N \omega_{N}^{1/N} (1-C_{K,N} (\delta+\eta+\varepsilon) ) \; {\rm Vol}_{g}(E)^{\frac{N-1}{N}},
 \end{equation}
  where $\Pe(E)$ is the perimeter of $E$.
\end{theorem}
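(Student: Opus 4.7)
The strategy is to view $(B_1(\bar x),\sfd_g,\Vol_g)$ as a local $\RCD(K-\varepsilon,N)$ space, since smooth Riemannian manifolds with Ricci bounded below satisfy the synthetic curvature-dimension condition, and to combine the $L^1$-optimal-transport localization with a quantitative Bishop--Gromov rigidity, thereby reducing \eqref{eq:AEIIsmooth} to a family of one-dimensional almost-Euclidean isoperimetric inequalities along transport rays.

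\emph{Step~1 (Needle decomposition).} For a finite-perimeter set $E\subset B_\delta(\bar x)$, apply the $L^1$-localization theorem to the balanced function $f=\ind_E-(\Vol_g(E)/\Vol_g(B_1(\bar x)))\ind_{B_1(\bar x)}$. This produces a Kantorovich potential and a partition of a full-measure subset of $B_1(\bar x)$ into transport rays $\{X_\alpha\}_{\alpha\in Q}$, together with a disintegration of $\Vol_g$ into one-dimensional conditional densities $h_\alpha$ supported on $X_\alpha$, against a quotient probability measure $\qq$. Each $h_\alpha$ satisfies the one-dimensional $\CD(K-\varepsilon,N)$ condition, i.e.\ $h_\alpha^{1/(N-1)}$ is $(K-\varepsilon)/(N-1)$-concave on $X_\alpha$. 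Under this decomposition both $\Vol_g(E)$ and a natural lower bound for $\Pe(E)$ split into ray-by-ray integrals (mass of $E$ on $X_\alpha$ and density at the endpoints of the $E$-slice, respectively). Only rays meeting $B_\delta(\bar x)$ matter, because $E\subset B_\delta(\bar x)$.

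\emph{Step~2 (Quantitative rigidity of the needle densities).} In the rigidity case $\eta=\varepsilon=0$, equality in Bishop--Gromov on $B_1(\bar x)$ forces each density $h_\alpha$ to coincide with the Euclidean profile $r\mapsto c_\alpha r^{N-1}$ emanating from the tip of its ray. The crucial step is to upgrade this to a \emph{quantitative} statement: for rays crossing $B_\delta(\bar x)$, the density $h_\alpha$ restricted to $X_\alpha\cap B_\delta(\bar x)$ is multiplicatively $C_{K,N}(\delta+\eta+\varepsilon)$-close to $r\mapsto c_\alpha r^{N-1}$. The three contributions enter naturally: $\eta$ encodes the global volume deficit via Bishop--Gromov, $\varepsilon$ is the slack in the curvature bound (perturbing both the $\CD$ model density and its exponent), and $\delta$ is the small length scale that makes the $(K-\varepsilon)$-curvature correction to the Euclidean profile controlled on the needle segment inside $B_\delta(\bar x)$.

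\emph{Step~3 (One-dimensional isoperimetry and conclusion).} On each ray, the one-dimensional Euclidean isoperimetric inequality applied to the slice $E\cap X_\alpha$, combined with the density estimate from Step~2, gives a ray-wise lower bound on the 1D perimeter of $E$ of the form $N\omega_N^{1/N}\bigl(1-C_{K,N}(\delta+\eta+\varepsilon)\bigr)$ times the appropriate power of the 1D mass. Integrating against $\qq$, invoking Jensen's inequality in the exponent $(N-1)/N$, and using the Fubini decomposition of $\Pe(E)$ from Step~1, then yields \eqref{eq:AEIIsmooth}. The main obstacle is Step~2: propagating the \emph{global} almost-maximal-volume assumption on $B_1(\bar x)$ into \emph{uniform} pointwise rigidity of the densities $h_\alpha$ on the much smaller ball $B_\delta(\bar x)$, with the precise linear dependence on $(\delta,\eta,\varepsilon)$, requires a careful integration of Bishop--Gromov along each ray together with a mechanism ensuring that rays crossing $B_\delta(\bar x)$ inherit the global volume information rather than contributing a degenerate model profile.
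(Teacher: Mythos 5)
Your Step~1 (needle decomposition via $L^1$-localization with the balanced function $f$) matches the paper's approach precisely. Your Steps~2 and~3, however, diverge from the paper, and Step~3 contains a genuine gap.

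\textbf{On Step~2.} The paper does not prove, and does not need, any pointwise rigidity statement for the individual ray densities $h_\alpha$. Such a claim (that $h_\alpha$ is multiplicatively $C(\delta+\eta+\varepsilon)$-close to $c_\alpha r^{N-1}$ on $X_\alpha \cap B_\delta(\bar x)$) is not obviously true: the almost-maximal volume hypothesis constrains the total mass $\int_Q \bar\mm_q(B_{\bar r}(\bar x))\,\qq(dq)$, not the profile of each individual needle. Instead, the paper bounds $\bar\mm_q^+(E_q)$ from below by the one-dimensional model isoperimetric profile $\cI_{K-\varepsilon,N,\bar r+\delta}\big(\bar\mm_q(E_q)\big)$ (Milman's 1D L\'evy--Gromov inequality, valid for \emph{every} $\CD(K-\varepsilon,N)$ needle of the given length), and then uses the explicit formula \eqref{eq:IKNvN-1Nprel} showing that $\cI_{K-\varepsilon,N,\bar r+\delta}(v) \geq N\omega_N^{1/N}(1-C_{K,N}(\delta+\varepsilon))\,v^{(N-1)/N}$ for small $v$. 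This sidesteps your Step~2 entirely.

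\textbf{On Step~3.} The invocation of Jensen's inequality is a genuine gap. The map $v\mapsto v^{(N-1)/N}$ is concave, so Jensen gives
\begin{equation*}
\int_Q \bar\mm_q(E_q)^{\frac{N-1}{N}}\,\qq(dq) \;\leq\; \Big(\int_Q \bar\mm_q(E_q)\,\qq(dq)\Big)^{\frac{N-1}{N}} = \bar\mm(E)^{\frac{N-1}{N}},
\end{equation*}
i.e.\ an \emph{upper} bound, which is the wrong direction for lower-bounding the perimeter. What saves the argument in the paper is precisely the per-ray balance condition in the third bullet of the localization theorem: $\int_{X_q} f_E\,\bar\mm_q = 0$ forces
\begin{equation*}
\bar\mm_q(E\cap X_q) = \frac{\bar\mm_q(B_{\bar r}(\bar x))}{\bar\mm(B_{\bar r}(\bar x))}\,\bar\mm(E),
\end{equation*}
so that after proving the prefactor is $1+O_{K,N}(\delta+\varepsilon)$ (via a Bishop--Gromov estimate for the denominator and a bounded-density estimate for the numerator), every ray carries essentially the \emph{same} fraction $\bar\mm(E)$ of its mass inside $E$. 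One then plugs this common value directly into the model profile and integrates trivially over $\qq$, with no averaging inequality in the exponent $(N-1)/N$ required. This equalization of ray masses is the key mechanism your proposal is missing, and without it the concluding step does not close.
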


Let us stress that the Riemannian manifold $(M, g)$ is not requited to be complete; indeed since all our arguments are local, the assumption that $B_{1}(\bar{x})$ is relatively compact  will suffice.

\begin{remark}
\begin{itemize}
\item One of the main motivations for establishing  Theorem \ref{thm:MainSmooth} comes from Ricci flow. Indeed in the celebrated Perelman's Pseudo-Locality Theorem (see \cite[Theorem 10.1, Theorem 10.3]{Perelman}) a crucial assumption is the validity of an almost euclidean isoperimetric inequality of the form of \eqref{eq:AEIIsmooth}.  Another version of the  Pseudo-Locality Theorem, proved by Tian-Wang \cite[Proposition 3.1]{TianWang},  states that under the assumptions of Theorem  \ref{thm:MainSmooth} for $K=0$ (namely almost non-negative Ricci curvature and almost euclidean volume  on a ball) it is possible to have a  point-wise bound  on  the norm of the Riemann tensor for a definite small time along the Ricci flow around the point $\bar{x}$. For instance, such a result is used in \cite{TianWang} to prove fine structural properties of non-collapsed Gromov Hausdorff limits of almost-Einstein manifolds.   A major difficulty one encounters in proving \cite[Proposition 3.1]{TianWang} is that with more classical methods it seems hard to establish an almost euclidean isoperimetric inequality (i.e. the content of Theorem \ref{thm:MainSmooth}) and this prevents a direct application of Perelman's Pseudo-Locality Theorem; such a difficulty is bypassed by Tian-Wang via a clever technical  argument.  As mentioned in \cite[Remark 3.1]{TianWang}, the validity of the almost euclidean isoperimetric inequality proved in Theorem \ref{thm:MainSmooth} permits to have a more streamlined proof of Tian-Wang's version of the Pseudo-Locality Theorem, as one can build on top of Perelman's work \cite{Perelman} more easily. 
\item  Theorem  \ref{thm:MainSmooth} should also be compared with Colding's \cite[Theorem 0.8]{Colding}, stating roughly that a ball $B$ having almost non-negative  Ricci curvature and almost euclidean volume is Gromov-Hausdorff close to a euclidean ball; in other terms $B$ is \emph{almost euclidean in  Gromov-Hausdorff sense}. Theorem \ref{thm:MainSmooth}, specialized to the case $K=0$, states roughly that  under the same assumptions of \cite[Theorem 0.8]{Colding} the ball $B$ contains a smaller ball of definite size which is  \emph{almost euclidean in isoperimetric sense}.
\end{itemize} 
\end{remark}

Combining Theorem \ref{thm:MainSmooth} with Colding's volume convergence \cite[Theorem 0.1]{Colding} we get the next corollary. In the next statement,  $\sfd_{GH}$ denotes the Gromov-Hausdorff distance and $B_{r}^{{\mathbb M}^{N}_{K/(N-1)}}$ is a  metric ball of radius $r$ in  the model space ${\mathbb M}^{N}_{K/(N-1)}$.
\begin{corollary}
For every $K\in \R, N\in [2,\infty)\cap \N$ there exist $\bar{\varepsilon}_{K,N}, \bar{\eta}_{K,N}, \bar{\delta}_{K,N}, C_{K,N}>0$ such that the next statement is satisfied.
Let $(M, g)$ be a smooth $N$-dimensional Riemannian manifold and let $\bar{x}\in M$. Assume that $B_{1}(\bar{x})$ is relatively compact and that for some $\varepsilon\in [0,\bar{\varepsilon}_{K,N}],  \eta \in [0,\bar{\eta}_{K,N}]$ it holds:
\begin{enumerate}
\item  $\sfd_{GH}\big(B_{1}(\bar{x}), B_{1}^{{\mathbb M}^{N}_{K/(N-1)}} \big) \leq (1-\eta) $,
\item %the closure of the ball $B_{2\bar{r}_{K,N}}(\bar{x})$ is compact and  
$\Ric_{g}\geq (K-\varepsilon)  g$ on  $B_{1}(\bar{x})$.
\end{enumerate}
Let $\delta \in (0,  \bar{\delta}_{K,N} ]$. Then for every finite perimeter subset $E \subset B_{\delta}(\bar{x})$ the almost euclidean isoperimetric inequality  \eqref{eq:AEIIsmooth} holds. 
\end{corollary}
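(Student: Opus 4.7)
The plan is to reduce the corollary to Theorem~\ref{thm:MainSmooth} via Colding's volume convergence theorem \cite[Theorem 0.1]{Colding}. The only difference between the two statements is that hypothesis (1) of the corollary controls the Gromov--Hausdorff distance from $B_1(\bar x)$ to the model ball, rather than the volume deficit directly; once the former is upgraded to the latter, Theorem~\ref{thm:MainSmooth} applies as a black box to yield \eqref{eq:AEIIsmooth}.

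The form of Colding's result that I would use asserts that for every $K \in \R$, $N \in [2,\infty)\cap \N$ and every $\eta' > 0$ there exists $\tilde\eta = \tilde\eta(K,N,\eta') > 0$ with the following property: if $(M,g)$ is a smooth $N$-dimensional Riemannian manifold with $\Ric_g \geq (K-\varepsilon) g$ on $B_1(\bar x)$ (for sufficiently small $\varepsilon$) and
\[
\sfd_{GH}\big(B_1(\bar x),\, B_1^{\mathbb{M}^{N}_{K/(N-1)}}\big) \leq \tilde\eta,
\]
then $\Vol_g(B_1(\bar x)) \geq (1-\eta')\Vol_{K,N}(1)$. Denote by $\bar\varepsilon_{K,N}^{\star}, \bar\eta_{K,N}^{\star}, \bar\delta_{K,N}^{\star}, C_{K,N}^{\star}$ the constants produced by Theorem~\ref{thm:MainSmooth}. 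I would then define the constants of the corollary by setting $\bar\varepsilon_{K,N} := \bar\varepsilon_{K,N}^\star$, $\bar\delta_{K,N} := \bar\delta_{K,N}^\star$, $C_{K,N} := C_{K,N}^\star$, and $\bar\eta_{K,N} := \tilde\eta(K, N, \bar\eta_{K,N}^\star)$. With this choice, hypothesis (1) of the corollary, combined with the Ricci control (2), forces $\Vol_g(B_1(\bar x)) \geq (1-\bar\eta_{K,N}^\star)\Vol_{K,N}(1)$ via Colding's theorem, so assumption (1) of Theorem~\ref{thm:MainSmooth} is satisfied with a parameter in $[0,\bar\eta_{K,N}^\star]$, and \eqref{eq:AEIIsmooth} follows.

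The main point that calls for attention is to verify that Colding's volume convergence remains valid in our purely local setup, where the Ricci bound is known only on $B_1(\bar x)$ and $M$ is merely assumed relatively compact on this ball rather than complete. Since the proof of Colding's theorem proceeds through the Bishop--Gromov volume comparison along minimising geodesics of length at most $1$ issuing from $\bar x$, all of which lie inside the region of Ricci control by definition, this adaptation is routine; no genuine new difficulty arises and the corollary follows.
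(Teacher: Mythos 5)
Your approach matches the paper's: the corollary is stated in the paper with no separate proof beyond the single sentence ``Combining Theorem~\ref{thm:MainSmooth} with Colding's volume convergence \cite[Theorem 0.1]{Colding} we get the next corollary'', and your argument is precisely a fleshing-out of that sentence — use Colding to upgrade the Gromov--Hausdorff hypothesis to a volume lower bound, then invoke Theorem~\ref{thm:MainSmooth} as a black box. Your attention to the fact that Colding's argument is itself local (Bishop--Gromov along unit-length geodesics from $\bar x$, which remain in the region of Ricci control) is a sensible and accurate remark.

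One point worth being aware of, which the paper's one-line derivation also passes over: as you have set it up, you choose $\bar\eta_{K,N}:=\tilde\eta(K,N,\bar\eta_{K,N}^\star)$ and then apply the theorem with the \emph{fixed} volume-deficit parameter $\bar\eta_{K,N}^\star$, so your conclusion is the inequality with $\bar\eta_{K,N}^\star$ in place of the corollary's $\eta$. Since Colding's volume convergence as cited is an $\epsilon$--$\delta$ statement (a modulus of continuity, not an explicit polynomial or linear bound), one does not automatically get that the volume deficit is $O(\eta)$ when $\sfd_{GH}\leq\eta$; thus obtaining literally \eqref{eq:AEIIsmooth} with the corollary's own $\eta$ in the error term requires either a quantitative form of Colding's theorem or the reader's willingness to interpret the error term as a modulus of continuity in $\eta$ rather than a linear one. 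This is a statement-level subtlety inherited from the paper rather than a defect in your reduction, but your closing ``and \eqref{eq:AEIIsmooth} follows'' is slightly too quick on this account and deserves a sentence acknowledging the distinction.
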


We will actually prove the following more general result for  non-necessarily smooth metric measure spaces satisfying  Ricci curvature lower bounds in a synthetic sense.  In order to state it precisely,  let us first define the model volume function  $r\mapsto \textrm{Vol}_{K,N} (r)$ for non necessarily integer $N\in (1,\infty)$:
\begin{equation}\label{eq:defVolKN}
{\rm Vol}_{K,N} (r):= 
\begin{cases}
\displaystyle N \omega_{N} \left(\frac{N-1}{K}\right)^{\frac{N-1}{2}} \int_{0}^{\min\left\{r, \sqrt{\frac{N-1}{K}} \pi\right \}} \sin(t \sqrt{K/(N-1)})^{N-1} \, dt  , & \textrm{if}\ K>0, \crcr
\displaystyle  \omega_{N} r^{N} & \textrm{if}\  K=0 , \crcr
\displaystyle N \omega_{N}  \left(\frac{N-1}{-K}\right)^{\frac{N-1}{2}} \int_{0}^{r} \sinh(t \sqrt{-K/(N-1)})^{N-1} \, dt  , & \textrm{if}\ K<0, \crcr
\end{cases}
\end{equation}
where 
\begin{equation}\label{eq:defomegaN}
\omega_{N}:= \frac{\pi^{\frac{N}{2}}}{\Gamma\left( \frac{N}{2}+1 \right)},
\end{equation}
with $\Gamma$ denoting the Euler's Gamma function.
Clearly \eqref{eq:defVolKN} is compatible with the geometric case, in the sense that  if $N\in \N$ then ${\rm Vol}_{K,N} (r)$ is nothing but the volume of the metric ball of radius $r$ 
in ${\mathbb M}^{N}_{K/(N-1)}$. Set $\bar{r}_{K,N}:= \pi \sqrt{ (N-1)/K}$ in case $\sup_{r>0}{\rm Vol}_{K,N}(r)<1$ and  otherwise  let  $\bar{r}_{K,N}>0$ to be such that  $\textrm{Vol}_{K,N}({\bar{r}_{K,N}})=1$.

\begin{theorem}\label{thm:MainNonSmooth}
For every $K\in \R, N\in [2,\infty)$ there exist $\bar{\varepsilon}_{K,N}, \bar{\eta}_{K,N}, \bar{\delta}_{K,N}, C_{K,N}>0$ such that the next statement is satisfied.
Let $(X,\sfd,\mm)$ be a geodesic metric space  endowed with a non-negative Borel measure. For a fixed $\bar{x}\in X$, assume  that $B_{4\bar{r}_{K,N}}(\bar{x})$ is relatively compact and that $B_{4\bar{r}_{K,N}}(\bar{x})\subset \supp(\mm), \mm(B_{4\bar{r}_{K,N}}(\bar{x}))< \infty$.   
\\Assume moreover that for some $\varepsilon\in [0,\bar{\varepsilon}_{K,N}],  \eta \in [0,\bar{\eta}_{K,N}]$ it holds:
\begin{enumerate}
\item  $\mm(B_{\bar{r}_{K,N}}(\bar{x})) \geq (1-\eta) {\rm{Vol}}_{K,N}({\bar{r}_{K,N}})$, 
%\footnote{AM: rispetto alla versione precedente, nella assunzione (1) ho rimpiazzato  $\mm(B_{\bar{r}_{K-\varepsilon,N}}(\bar{x})) \geq 1-\eta$ con $\mm(B_{\bar{r}_{K,N}}(\bar{x})) \geq 1-\eta$, in maniera che gli  statements di tutti i main theorem sia omogenei (prima nel teorema smooth c'era $\mm(B_{\bar{r}_{K,N}}(\bar{x})) \geq 1-\eta$ mentre nei teoremi non smooth c'era  $\mm(B_{\bar{r}_{K-\varepsilon,N}}(\bar{x})) \geq 1-\eta$.)}
%
\item $\limsup_{r\downarrow 0} \frac{\mm(B_{r}(\bar{x}))}{\omega_{N} r^{N}} \leq 1+\eta$,
\item  $(X,\sfd,\mm)$ is essentially non-branching and verifies $\CD_{loc}(K-\varepsilon,N)$ inside $B_{4 \bar{r}_{K,N}}(\bar x)$.
\end{enumerate}
Then for every $\delta \in (0,\bar{\delta}_{K,N}]$ and every finite perimeter subset $E\subset B_{\delta}(\bar{x})$ the following almost euclidean isoperimetric inequality holds:
\begin{equation}\label{eq:AEII}
\Pe(E) \geq N \omega_{N}^{1/N} \Big(1-C_{K,N} (\delta+\varepsilon+\eta) \Big) \; \mm(E)^{\frac{N-1}{N}},
\end{equation}
where $\Pe(E)$ is the perimeter of $E$.
\end{theorem}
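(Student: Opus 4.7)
The strategy combines a quantitative Bishop--Gromov rigidity with the 1D localization (needle decomposition) technique natural for essentially non-branching $\CD_{loc}(K-\varepsilon,N)$ spaces. The first ingredient is to show that hypotheses (1) and (2) force the volume of $B_r(\bar{x})$ to be close to the euclidean ball volume $\omega_N r^N$ at \emph{every} intermediate scale $r\in(0,\bar{r}_{K,N}]$. Under $\CD_{loc}(K-\varepsilon,N)$ together with essential non-branching a localized Bishop--Gromov inequality applies, so the ratio $r\mapsto \mm(B_r(\bar{x}))/{\rm Vol}_{K-\varepsilon,N}(r)$ is non-increasing on $(0,\bar{r}_{K,N}]$. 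Pairing (1) at $r=\bar{r}_{K,N}$ with the upper density bound (2) as $r\to 0$ squeezes this monotone ratio between $1-O(\eta)$ and $1+O(\eta)$, yielding $\mm(B_r(\bar{x}))=\omega_N r^N(1+O(\varepsilon+\eta+r))$ uniformly in $r$. This is the volume rigidity driving everything that follows.

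Next, for a given finite perimeter set $E\subset B_\delta(\bar{x})$ I apply the 1D localization theorem to a $1$-Lipschitz guiding function $u$, taken as a Kantorovich potential for a balanced optimal transport with one marginal supported on $E$ and the other on a convenient region away from $\bar{x}$ (for instance an annulus near $\bar{r}_{K,N}$). The outcome is a disintegration
\[
\mm\llcorner\T = \int_{\QQ} h_\alpha\,\haus^1\llcorner X_\alpha\,d\qq(\alpha)
\]
of $\mm$ restricted to the transport set $\T$ onto transport rays $X_\alpha$, each conditional density $h_\alpha$ satisfying a 1D $\CD(K-\varepsilon,N)$ condition along $X_\alpha$. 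The localization also delivers a co-area type inequality $\Pe(E)\geq \int_{\QQ}\Pe_\alpha(E\cap X_\alpha)\,d\qq(\alpha)$, where $\Pe_\alpha$ is the 1D perimeter weighted by $h_\alpha$.

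On each ray the 1D isoperimetric inequality under $\CD(K-\varepsilon,N)$ is sharp, with a constant depending on $h_\alpha$ that equals $N\omega_N^{1/N}$ exactly when $h_\alpha$ is proportional to $t^{N-1}$ on the initial segment. Using the volume rigidity of the first step, I show that for $\qq$-a.e.\ ray the density $h_\alpha$ is $C_{K,N}(\delta+\varepsilon+\eta)$-close to $t^{N-1}$ on the initial portion $[0,2\delta]$, which is the only part of each ray that meets $E\subset B_\delta(\bar{x})$. Plugging this into the 1D inequalities and integrating against $\qq$ then yields \eqref{eq:AEII}.

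The main obstacle is this last quantitative step: transferring the \emph{global} almost-rigidity (a single scalar estimate on $\mm(B_r(\bar{x}))$) into \emph{fibre-wise} almost-rigidity of the densities $h_\alpha$. This will require combining the monotonicity/log-concavity built into the $\CD$ condition for each $h_\alpha$, a ray-by-ray Bishop--Gromov type comparison, and the smallness of $\delta$ to confine the analysis to the initial portion of each transport ray where a near-euclidean profile can actually be certified.
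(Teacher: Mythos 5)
Your overall framework (localization via $L^1$-transport in essentially non-branching $\CD_{loc}$ spaces, reducing to $1$D conditional densities satisfying a $\CD(K-\varepsilon,N)$ condition) is the right one and matches the paper. However, the crucial step, which you yourself flag as ``the main obstacle,'' is where your route diverges from the paper's and where the gap lies. You propose to prove \emph{fibre-wise density rigidity}: that for $\qq$-a.e.\ transport ray the density $h_\alpha$ is $C_{K,N}(\delta+\varepsilon+\eta)$-close to $t^{N-1}$ on the initial segment. The global volume rigidity you extract from assumptions (1)--(2) is an \emph{integral} constraint, $\int_Q \tilde\mm_q([0,r])\,\qq(dq)\approx \omega_N r^N$; it does not by itself give pointwise control on each $h_\alpha$, since individual rays may over- or under-shoot and cancel out in the integral. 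You would need an additional argument (a ray-by-ray rigidity combined with a quantitative ``no cancellation'' estimate), and no such step is supplied.

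The paper avoids this obstacle entirely. Instead of choosing the target marginal to be an annulus near radius $\bar{r}_{K,N}$, the paper takes the whole ball $B_{\bar r}(\bar x)$ as the complement in the zero-mean function $f_E=\chi_E - \frac{\bar\mm(E)}{\bar\mm(B_{\bar r}(\bar x))}\chi_{B_{\bar r}(\bar x)}$. The resulting balance condition $\int_{X_q} f_E\,\bar\mm_q=0$ on each ray delivers an \emph{exact} relation $\bar\mm_q(E\cap X_q)=\frac{\bar\mm_q(B_{\bar r}(\bar x))}{\bar\mm(B_{\bar r}(\bar x))}\bar\mm(E)$, and the ratio in front is close to $1$ by two soft bounds: Bishop--Gromov gives $\bar\mm(B_{\bar r}(\bar x))\geq 1-C_{K,N}(\delta+\varepsilon)$, and the uniform density bound \eqref{E:bound} for $\CD$ densities on intervals of definite length gives $\bar\mm_q(B_{\bar r}(\bar x))\geq 1-C_{K,N}\delta$. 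Neither bound requires $h_\alpha$ to resemble $t^{N-1}$. The second ingredient the paper uses is that the \emph{model} $1$D isoperimetric profile $\cI_{K-\varepsilon,N,\bar r+\delta}(v)$, not any individual density, already has euclidean asymptotics $N\omega_N^{1/N}v^{\frac{N-1}{N}}+O(v^{\frac{3(N-1)}{N}})$ for small $v$; combined with $\bar\mm^+_q(E_q)\geq\cI_{K-\varepsilon,N,\bar r+\delta}(\bar\mm_q(E_q))$ (the $1$D isoperimetric inequality of \cite{CM}) and the above ratio estimate, the result follows by integrating in $q$. Your annulus choice is also problematic on its own: with an annulus $A$ as target, the balance condition gives $\bar\mm_q(E_q)=\frac{\bar\mm_q(A)}{\bar\mm(A)}\bar\mm(E)$, and now both $\bar\mm_q(A)$ and $\bar\mm(A)$ are small quantities whose ratio could deviate substantially from $1$ exactly when $h_\alpha$ is not euclidean near $\bar{r}_{K,N}$ --- so you are forced back into the density-rigidity problem. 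In short: the paper's choice of zero-mean function is not just a convenience, it is what makes the argument close without a fibre-wise rigidity lemma.
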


In the formulation of Theorem \ref{thm:MainNonSmooth}, point $(3)$ has to be understood as: the metric measure space $(X,\sfd,\mm)$ is essentially non-branching (see Definition \ref{D:essnonbranch}) and 
for every $x \in B_{4 \bar{r}_{K,N}}(\bar x)$ there exists a neighbourhood $U$ such that 
$\CD(K-\ve,N)$ is verified inside $U$. Recall that the $\CD(K,N)$ condition, introduced by Lott-Villani \cite{lottvillani:metric} and Sturm \cite{sturm:I, sturm:II}, is a synthetic notion of Ricci curvature bounded below by $K\in \R$ and dimension bounded above by $N\in [1,+\infty]$ for non smooth spaces (more precisely for metric measure spaces, i.e. metric spaces $(X,\sfd)$ endowed a non-negative measure $\mm$ playing the  role of  reference volume measure).

\begin{remark}
\begin{enumerate}
\item From Bishop volume comparison, one can easily check that  $\bar{r}_{K,N}-C_{K,N} \varepsilon \leq \bar{r}_{K-\varepsilon,N} \leq \bar{r}_{K,N}$; thus assumption (2) combined with Bishop-Gromov relative volume comparison \eqref{E:bishop} and with the volume estimate of annuli \eqref{E:mmAnnuli}, implies 
\begin{align*}
\mm(B_{\bar{r}_{K,N}}(\bar{x}))&=\mm(B_{\bar{r}_{K-\varepsilon,N}}(\bar{x}))- \Big(\mm(B_{\bar{r}_{K-\varepsilon,N}}(\bar{x}))-\mm(B_{\bar{r}_{K,N}}(\bar{x})) \Big) \nonumber\\
& \leq \left(\limsup_{r\downarrow 0} \frac{\mm(B_{r}(\bar{x}))}{\omega_{N} r^{N}} \right)  \textrm{Vol}_{K-\varepsilon,N}(\bar{r}_{K-\varepsilon,N}) +C_{K,N} \varepsilon   \nonumber\\
& \leq 1+\eta +C_{K,N} \varepsilon.
\end{align*}
 Therefore assumption (1) is  an``almost maximal volume'' condition. 
 
\item  For $N\in (1,2)$ the error in the almost euclidean isoperimetric inequality  \eqref{eq:AEII} is slightly worse; more precisely, for $N\in (1, 2)$,  \eqref{eq:AEII} is replaced by
 \begin{equation}\label{eq:AEIIN12}
 \Pe(E) \geq N \omega_{N}^{1/N} \Big(1-C_{K,N} (\delta^{\frac{2(N-1)}{N}}+\varepsilon+\eta) \Big) \; \mm(E)^{\frac{N-1}{N}}.
 \end{equation}
 
 \item In the paper we will prove directly the more general Theorem \ref{thm:MainNonSmooth}. Let us briefly comment on the fulfilment of the assumptions of  Theorem \ref{thm:MainNonSmooth} under the ones of Theorem \ref{thm:MainSmooth}.
\\First of all assumption (1) combined with Bishop-Gromov monotonicity implies that  ${\rm Vol}_{g}(B_{r}(\bar{x})) \geq (1-\eta) {\rm Vol}_{K,N} (r)$ for all $r\in (0,1)$, in particular for $r=1/4$.  Moreover,  by a standard scaling argument, it is equivalent to prove Theorem \ref{thm:MainSmooth} for the ball of radius one and for the ball of radius $4\bar{r}_{K,N}$.
\\Using geodesic normal coordinates centered in $\bar{x}$ it is readily checked that  on a smooth Riemannian manifold $(M^{N},g)$ it holds  $\lim_{r\downarrow 0} \frac{{\rm Vol}_{g}(B_{r}(\bar{x}))}{\omega_{N} r^{N}} =1$, so assumption (2) is always verified.
\\Since a Riemannian manifold is always a non-branching space, from the compatibility of the $\CD$ conditions   with the  smooth counterpart we have that assumption (3) in Theorem  \ref{thm:MainNonSmooth}
is equivalent to assumption (2) of  Theorem  \ref{thm:MainSmooth} in case the space $(X,\sfd,\mm)$ is a smooth Riemannian manifold.
%\\Regarding assumption (1) in Theorem  \ref{thm:MainNonSmooth} it is readily seen that assumption (1) of Theorem  \ref{thm:MainSmooth} implies ${\rm Vol}_{g}(B_{\bar{r}_{K-\varepsilon,N}}(\bar{x})) \geq 1-\eta-C_{K,N}\varepsilon$. Indeed, using Bishop-Gromov volume comparison, one can easily check that  $\bar{r}_{K,N}-C_{K,N} \varepsilon \leq \bar{r}_{K-\varepsilon,N} \leq \bar{r}_{K,N}$; therefore, by volume comparison of annuli, we get 
%\begin{align*}
%\Vol_{g}(B_{\bar{r}_{K-\varepsilon,N}}(\bar{x})) &=  \Vol_{g}(B_{\bar{r}_{K,N}}(\bar{x}))- \Vol_{g}(B_{\bar{r}_{K,N}} (\bar{x}) \setminus B_{\bar{r}_{K-\varepsilon,N}} (\bar{x}))\\
%&\geq   \Vol_{g}(B_{\bar{r}_{K,N}}(\bar{x}))- \Vol_{g}(B_{\bar{r}_{K,N}} (\bar{x}) \setminus B_{\bar{r}_{K,N}-C_{K,N} \varepsilon} (\bar{x})) \\
%& \geq   \Vol_{g}(B_{\bar{r}_{K,N}}(\bar{x}))- \big(\Vol_{K-\varepsilon,N} (\bar{r}_{K,N})- \Vol_{K-\varepsilon,N}  (\bar{r}_{K,N}-C_{K,N} \varepsilon) \big)  \\
%& \geq   \Vol_{g}(B_{\bar{r}_{K,N}}(\bar{x}))-  C_{K,N} \varepsilon \geq 1-\eta-   C_{K,N} \varepsilon. 
%\end{align*}
\item In order to simplify the presentation we will prove the isoperimetric inequalities for the outer Minkowski content. With analogous (just slightly more technical) arguments  the same proof carries for the Perimeter (see for instance \cite{CM:Per}). Also, since the Perimeter is the relaxation of the outer Minkowski content (see for instance \cite{AGD}),  isoperimetric inequalities written in terms of the outer Minkowski content are completely equivalent to the (a priori stronger) corresponding statements written in terms of the Perimeter.
\end{enumerate}
\end{remark}

A remarkable class of metric measure spaces satisfying  Ricci curvature lower  bounds in a synthetic sense are the so called $\RCD^{*}(K,N)$-spaces, 
which include the  notable subclasses of  Alexandrov spaces with curvature bounded below (see \cite{PLSV}), and Ricci limit spaces (i.e. measured-Gromov-Hausdorff limits of Riemannian manifolds satisfying Ricci curvature lower bounds and dimension upper bounds, no matter if collapsed or not; see for instance \cite{GMS2013}).
As shown in the recent \cite{CMi}, $\RCD^{*}(K,N)$  is equivalent to $\RCD(K,N)$; 
in particular $\RCD(K,N)$-spaces are essentially non-branching (see \cite{RS2014}) and satisfy the $\CD_{loc}(K,N)$-condition and therefore the next corollary 
follows immediately from Theorem \ref{thm:MainNonSmooth}.

\begin{corollary}\label{Cor:RCD}
For every $K\in \R, N\in [2,\infty)$ there exist $\bar{\varepsilon}_{K,N}, \bar{\eta}_{K,N}, \bar{\delta}_{K,N}, C_{K,N}>0$ such that the next statement is satisfied.
Let $(X,\sfd,\mm)$ be an $\RCD(K-\varepsilon,N)$ space for some  $\varepsilon\in [0,\bar{\varepsilon}_{K,N}]$,  and let $\bar{x}\in X$. Assume that for some   $\eta \in [0,\bar{\eta}_{K,N}]$ it holds:
$$
\mm(B_{\bar{r}_{K,N}}(\bar{x})) \geq (1-\eta) {\rm{Vol}}_{K,N}({\bar{r}_{K,N}}) \quad \text{and} \quad  \limsup_{r\downarrow 0} \frac{\mm(B_{r}(\bar{x}))}{\omega_{N} r^{N}} \leq 1+\eta.
$$
Let $\delta \in (0,\bar{\delta}_{K,N}]$. Then for every finite perimeter subset $E\subset B_{\delta}(\bar{x})$ the following almost euclidean isoperimetric inequality holds:
\begin{equation}\label{eq:AEIIRCD}
 \Pe(E) \geq N \omega_{N}^{1/N} \Big(1-C_{K,N} (\delta+\varepsilon+\eta) \Big) \; \mm(E)^{\frac{N-1}{N}},
\end{equation}
where $\Pe(E)$ is the perimeter of $E$.
\end{corollary}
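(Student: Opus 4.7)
The plan is simply to check that every hypothesis of Theorem \ref{thm:MainNonSmooth} is met under the present assumptions, so that \eqref{eq:AEIIRCD} follows by invoking \eqref{eq:AEII} verbatim with the same universal constants $\bar{\varepsilon}_{K,N}, \bar{\eta}_{K,N}, \bar{\delta}_{K,N}, C_{K,N}$ furnished by the theorem. The proof is thus purely structural, matching definitions of $\RCD$-spaces against results already available in the literature, with no genuinely new argument to be made.

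First I would dispose of the topological and measure-theoretic preliminaries on $B_{4\bar{r}_{K,N}}(\bar{x})$. An $\RCD(K-\varepsilon,N)$ space with finite $N$ is by definition a geodesic metric measure space whose reference measure $\mm$ is locally finite, non-negative, Borel, and of full support; moreover it is proper, because $\CD(K-\varepsilon,N)$ gives a Bishop--Gromov doubling bound on bounded sets of a complete space. Consequently $B_{4\bar{r}_{K,N}}(\bar{x})$ is automatically relatively compact, is contained in $\supp(\mm)=X$, and has finite $\mm$-measure, so the topological prerequisites of Theorem \ref{thm:MainNonSmooth} come for free. Hypotheses (1) and (2) of that theorem are literally the two volume inequalities already assumed in the corollary, so no further argument is required there.

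The only point that needs something to be invoked is hypothesis (3), and this is precisely the chain of implications recalled in the paragraph preceding the statement. By Rajala--Sturm \cite{RS2014} every $\RCD(K-\varepsilon,N)$ space is essentially non-branching; by the equivalence $\RCD=\RCD^{*}$ proved in \cite{CMi} the space satisfies in particular $\CD^{*}(K-\varepsilon,N)$; and in the essentially non-branching setting Bacher--Sturm's local-to-global theorem then gives $\CD_{loc}(K-\varepsilon,N)$ globally on $X$, and \emph{a fortiori} inside $B_{4\bar{r}_{K,N}}(\bar{x})$. Once (3) is checked, Theorem \ref{thm:MainNonSmooth} applies directly and produces \eqref{eq:AEIIRCD}. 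I do not foresee any real obstacle in this deduction: the only delicate point is the bookkeeping between the several curvature-dimension conditions ($\CD$, $\CD^{*}$, $\CD_{loc}$, $\RCD$, $\RCD^{*}$), but each implication used is either immediate from the definitions or a known result which the authors themselves quote in the discussion preceding the corollary.
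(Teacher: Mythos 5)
Your proof is correct and takes the same structural approach as the paper's (which is a one-line remark: RCD spaces are essentially non-branching by \cite{RS2014} and satisfy $\CD_{loc}$, so Theorem \ref{thm:MainNonSmooth} applies). Your bookkeeping of the preliminaries — geodesic, proper, full support, finite measure on balls — is sound.

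One small point of bookkeeping is off, though. You route the verification of hypothesis (3) through $\CD^{*}$: you say that the equivalence $\RCD=\RCD^{*}$ yields $\CD^{*}(K-\varepsilon,N)$, and then ``Bacher--Sturm's local-to-global theorem gives $\CD_{loc}(K-\varepsilon,N)$''. This is backwards on two counts. First, local-to-global goes from $\CD^{*}_{loc}$ to $\CD^{*}$, not the other way, so it cannot be the tool producing $\CD_{loc}$ from a global hypothesis. Second, the detour is unnecessary: as recalled in the paper's preliminaries, $\RCD(K-\varepsilon,N)$ is \emph{by definition} $\CD(K-\varepsilon,N)$ plus infinitesimal Hilbertianity, and $\CD(K-\varepsilon,N)$ trivially implies $\CD_{loc}(K-\varepsilon,N)$ (with $\tau$-coefficients, which is what Definition \ref{D:loc} requires). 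The role of \cite{CMi} in the paper is just to justify that the familiar class of $\RCD^{*}$-spaces coincides with $\RCD$-spaces, so that the corollary applies to them; it is not needed to derive $\CD_{loc}$. Your conclusion is nonetheless correct.
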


\subsection*{Outline of the strategy and organization of the paper} 
One of the main difficulties in proving Theorem \ref{thm:MainSmooth} relies in the fact that the classical proof of the L\'evy-Gromov isoperimetric inequality \cite[Appendix C]{Gro}  gives a lower bound on the perimeter of an \emph{isoperimetric region}; indeed in the arguments of L\'evy and Gromov it is essential that the boundary of an isoperimetric region is smooth (up to a singular part of large Hausdorff codimension) and has constant mean curvature. A first problem one faces in trying to adopt such a strategy in proving a local statement like Theorem \ref{thm:MainSmooth}  is that it is not clear whether the ball in question contains isoperimetric regions for small volumes, or if these tend to go out (or to the boundary) of the ball itself.

A way to overcome this problem is to prove a lower bound on the perimeter of \emph{every} Borel subset contained in the ball, not just for minimizers.
To this aim we employ a localization argument which roughly aims to  reduce the problem to a family of lower dimensional (in our case actually 1-dimensional) inequalities.
The localization technique has its roots in a work of Payne-Weinberger \cite{PW} and was then formalized by Gromov-Milman \cite{GrMi} and  Kannan-Lovatz-Simonovitz \cite{KaLoSi}. All these papers are set in very symmetric frameworks (like $\R^{n}$ or ${\mathbb S}^{n}$) and  the dimension reduction is performed by iterative bisections, which heavily make use of the symmetry of the space. Recently, via a completely different approach via $L^{1}$-optimal transportation, Klartag \cite{klartag} has  been able to generalize the localization method to smooth Riemannian manifolds without any symmetry assumption.  The localization via $L^{1}$-transportation has been further extended \cite{CM} by the authors of the present paper   to non smooth spaces satisfying lower Ricci curvature bounds, more precisely to essentially non branching $\CD_{loc}(K,N)$ metric measure spaces. With such a technique, combined also with the work of E. Milman \cite{Mil} on isoperimetry in weighted manifolds, in \cite{CM} the authors established  the validity of the  L\'evy-Gromov inequality in essentially non branching $\CD_{loc}(K,N)$ metric measure spaces (see also  \cite{CM2} for more applications).

In the present paper we adopt the point of view of \cite{CM}. The main technical challenge is given by the fact that while the curvature and non-branching conditions  in  \cite{CM} were assumed to hold over all the space, here they are just assumed  on a metric ball. This introduces 
some difficulties which are tackled in Section \ref{Sec:Loc}; one of the main ideas is to replace the set $\Gamma$, defined in \eqref{E:Gamma} and used in the analysis of \cite{CM},  by the set $\bar{\Gamma}_{\delta,r}$ defined  in \eqref{E:barGammadeltar}.
Such a replacement brings technical  changes in the arguments of  \cite{CM} which are presented in Section  \ref{Sec:Loc}. Thanks to this adjustment, all the arguments will be localized inside the ball $B_{4\bar{r}_{K,N}}(\bar{x})$; this is the reason why it is enough to assume \emph{local} conditions on the space. 
\\Section \ref{S:ProofMain} is finally  devoted to the proof of Theorem \ref{thm:MainNonSmooth}, which directly  implies Theorem \ref{thm:MainSmooth}  and Corollary  \ref{Cor:RCD} as already mentioned above.

\subsection*{Acknowledgment}
The authors wish to express their gratitude to  Richard Bamler  for pointing out to them  the relevance of  having an almost euclidean isoperimetric inequality under Ricci curvature lower bounds, and to Peter Topping for his interest  and keen support which added motivation to this project.

\section{Preliminaries}

In this introductory section, for simplicity of presentation, we will assume  $(X, \sfd)$ to be a complete, proper and separable metric space endowed with a  positive Radon measure $\mm$.  As explained later, since in the paper we will only work inside a metric ball $B$, the global completeness and properness can be relaxed to the assumption that the metric ball $B$ is relatively compact. The triple   $(X, \sfd,\mm)$ is said to be a metric measure space, m.m.s. for short.
\\The space of all Borel probability measures over $X$ will be denoted by $\mathcal{P}(X)$.
\\A metric space is a geodesic space if and only if for each $x,y \in X$ 
there exists $\gamma \in \Geo(X)$ so that $\gamma_{0} =x, \gamma_{1} = y$, with
$$
\Geo(X) : = \{ \gamma \in C([0,1], X):  \sfd(\gamma_{s},\gamma_{t}) = |s-t| \sfd(\gamma_{0},\gamma_{1}), \text{ for every } s,t \in [0,1] \}.
$$
Recall that, for complete geodesic spaces, local compactness is equivalent to properness (a metric space is proper if every closed ball is compact).

\medskip

We denote with $\mathcal{P}_{2}(X)$ the space of probability measures with finite second moment  endowed with the $L^{2}$-Kantorovich-Wasserstein distance  $W_{2}$ defined as follows:  for $\mu_0,\mu_1 \in \mathcal{P}_{2}(X)$ we set
\begin{equation}\label{eq:Wdef}
  W_2^2(\mu_0,\mu_1) = \inf_{ \pi} \int_{X\times X} \sfd^2(x,y) \, \pi(dxdy),
\end{equation}
where the infimum is taken over all $\pi \in \mathcal{P}(X \times X)$ with $\mu_0$ and $\mu_1$ as the first and the second marginal.
Assuming the space $(X,\sfd)$ to be geodesic, also the space $(\mathcal{P}_2(X), W_2)$ is geodesic. 

Any geodesic $(\mu_t)_{t \in [0,1]}$ in $(\mathcal{P}_2(X), W_2)$  can be lifted to a measure $\nu \in {\mathcal {P}}(\Geo(X))$, 
so that $({\rm e}_t)_\sharp \, \nu = \mu_t$ for all $t \in [0,1]$. 
Here for any $t\in [0,1]$,  ${\rm e}_{t}$ denotes the evaluation map: 
$$
  {\rm e}_{t} : \Geo(X) \to X, \qquad {\rm e}_{t}(\gamma) : = \gamma_{t}.
$$

Given $\mu_{0},\mu_{1} \in \mathcal{P}_{2}(X)$, we denote by 
$\Opt(\mu_{0},\mu_{1})$ the space of all $\nu \in \mathcal{P}(\Geo(X))$ for which $({\rm e}_0,{\rm e}_1)_\sharp\, \nu$ 
realizes the minimum in \eqref{eq:Wdef}. If $(X,\sfd)$ is geodesic, then the set  $\Opt(\mu_{0},\mu_{1})$ is non-empty for any $\mu_0,\mu_1\in \mathcal{P}_2(X)$.
It is worth also introducing the subspace of $\mathcal{P}_{2}(X)$
formed by all those measures absolutely continuous with respect with $\mm$: it is denoted by $\mathcal{P}_{2}(X,\sfd,\mm)$.

\subsection{Geometry of metric measure spaces}\label{Ss:geom}
Here we briefly recall the synthetic notions of  Ricci curvature lower bounds, for more details we refer to  \cite{lottvillani:metric,sturm:I, sturm:II, Vil}.

In order to formulate the curvature properties for $(X,\sfd,\mm)$ we introduce the following distortion coefficients: given two numbers $K,N\in \erre$ with $N\geq1$, we set for $(t,\theta) \in[0,1] \times \erre_{+}$, 
\begin{equation}\label{E:tau}
\tau_{K,N}^{(t)}(\theta):= 
\begin{cases}
\infty, & \textrm{if}\ K\theta^{2} \geq (N-1)\pi^{2}, \crcr
\displaystyle   t^{1/N} \left( \frac{\sin(t\theta\sqrt{K/(N-1)})}{\sin(\theta\sqrt{K/(N-1)})} \right)^{(N-1)/N}& \textrm{if}\ 0< K\theta^{2} <  (N-1)\pi^{2}, \crcr
t & \textrm{if}\ K \theta^{2}<0 \ \textrm{and}\ N=1, \ \textrm{or  if}\ K \theta^{2}=0,  \crcr
\displaystyle    t^{1/N} \left( \frac{\sinh(t\theta\sqrt{-K/(N-1)})}{\sinh(\theta\sqrt{-K/(N-1)})} \right)^{(N-1)/N}  & \textrm{if}\ K\theta^{2} \leq 0 \ \textrm{and}\ N>1.
\end{cases}
\end{equation}
We will also make use of the coefficients  $\sigma$ defined, for $K,N \in \R, \;N>1$, by  the identity: 
%\footnote{AM: ho aggiunto la definizione dei $\sigma$ visto che sono usati  nella \eqref{E:bound} e l\'i intorno}
\begin{equation}\label{E:sigma}
\tau_{K,N}^{(t)}(\theta): = t^{1/N} \sigma_{K,N-1}^{(t)}(\theta)^{(N-1)/N}.
\end{equation}
As we will consider only the case of essentially non-branching spaces, we recall the following definition. 
\begin{definition}\label{D:essnonbranch}
A metric measure space $(X,\sfd, \mm)$ is \emph{essentially non-branching} if and only if for any $\mu_{0},\mu_{1} \in \mathcal{P}_{2}(X)$,
with $\mu_{0},\mu_{1}$ absolutely continuous with respect to $\mm$, any element of $\Opt(\mu_{0},\mu_{1})$ is concentrated on a set of non-branching geodesics.
\end{definition}

A set $F \subset \Geo(X)$ is a set of non-branching geodesics if and only if for any $\gamma^{1},\gamma^{2} \in F$, it holds:
$$
%\gamma_{0}^{1} = \gamma_{0}^{2}, \; 
\exists \;  \bar t\in (0,1) \text{ such that } \ \forall t \in [0, \bar t\,] \quad  \gamma_{ t}^{1} = \gamma_{t}^{2}   
\quad 
\Longrightarrow 
\quad 
\gamma^{1}_{s} = \gamma^{2}_{s}, \quad \forall s \in [0,1].
$$
It is clear that if $(X,\sfd)$ is a  smooth Riemannian manifold  then any subset $F \subset \Geo(X)$  is a set of non branching geodesics, in particular any smooth Riemannian manifold is essentially non-branching.

\begin{definition}[$\CD$ condition]\label{D:CD}
An essentially non-branching m.m.s. $(X,\sfd,\mm)$ verifies $\mathsf{CD}(K,N)$  if and only if for each pair 
$\mu_{0}, \mu_{1} \in \mathcal{P}_{2}(X,\sfd,\mm)$ there exists $\nu \in \Opt(\mu_{0},\mu_{1})$ such that
\begin{equation}\label{E:CD}
\r_{t}^{-1/N} (\gamma_{t}) \geq  \tau_{K,N}^{(1-t)}(\sfd( \gamma_{0}, \gamma_{1}))\r_{0}^{-1/N}(\gamma_{0}) 
 + \tau_{K,N}^{(t)}(\sfd(\gamma_{0},\gamma_{1}))\r_{1}^{-1/N}(\gamma_{1}), \qquad \nu\text{-a.e.} \, \gamma \in \Geo(X),
\end{equation}
for all $t \in [0,1]$, where $({\rm e}_{t})_\sharp \, \nu = \r_{t} \mm$.
\end{definition}

For the general definition of $\CD(K,N)$ see \cite{lottvillani:metric, sturm:I, sturm:II}. It is worth recalling that if $(M,g)$ is a Riemannian manifold of dimension $n$ and 
$h \in C^{2}(M)$ with $h > 0$, then the m.m.s. $(M,\sfd_{g},h \, {\rm Vol}_{g})$ (where $\sfd_{g}$ and ${\rm Vol}_{g}$ denote the Riemannian distance and volume induced by $g$) verifies $\CD(K,N)$ with $N\geq n$ if and only if  (see Theorem 1.7 of \cite{sturm:II})
$$
\Ric_{g,h,N} \geq  K g, \qquad \Ric_{g,h,N} : =  \Ric_{g} - (N-n) \frac{\nabla_{g}^{2} h^{\frac{1}{N-n}}}{h^{\frac{1}{N-n}}}.  
$$

We also mention the more recent Riemannian curvature dimension condition $\RCD^{*}$ introduced 
in the infinite dimensional case $N = \infty$  in \cite{AGS11b} for finite reference measure $\mm$ (see  also  \cite{AGMR12} for the extension to $\sigma$-finite reference measures and for a simplification in the axiomatization).
The finite dimensional refinement lead to the  class of $\RCD^{*}(K,N)$-spaces  with $N<\infty$: a m.m.s. verifies $\RCD^{*}(K,N)$ if and only if it satisfies  $\CD^{*}(K,N)$ (see \cite{BS10}) and is 
infinitesimally Hilbertian \cite[Definition 4.19 and Proposition 4.22]{gigli:laplacian}, meaning that 
the Sobolev space $W^{1,2}(X,\mm)$ is a Hilbert space (with the Hilbert structure induced by the Cheeger energy). We refer to  \cite{gigli:laplacian, EKS,AMS, MN} for a general account 
on the synthetic formulation of the latter Riemannian-type Ricci curvature lower bounds in finite dimension.  

A remarkable property  is the equivalence of the $\RCD^{*}(K,N)$ condition and the  Bochner inequality \cite{EKS,AMS}; 
moreover the class of $\RCD^{*}(K,N)$-spaces  include the  notable subclasses of  Alexandrov spaces with curvature bounded below (see \cite{PLSV}), and Ricci limit spaces (i.e. measured-Gromov-Hausdorff limits of Riemannian manifolds satisfying Ricci curvature lower bounds and dimension upper bounds, no matter if collapsed or not; see for instance \cite{GMS2013}). It was proved in \cite{RS2014} that $\RCD^{*}(K,N)$-spaces are essentially non-branching.

Recently it has been proved the equivalence between $\RCD^{*}(K,N)$ and $\RCD(K,N)$, where the latter 
is the analogous strengthening of the Curvature Dimension condition (see \cite[Section 13.2]{CMi}): a m.m.s. verifies $\RCD(K,N)$ if and only if it satisfies $\CD(K,N)$ and is 
infinitesimally Hilbertian.  The difference between $\CD^{*}(K,N)$ and $\CD(K,N)$ being that the former (called reduced Curvature Dimension condition) is a priori weaker and more suitable to the local-to-global property (under the essential non-branching assumption), the latter has the advantage of giving sharp constants in the comparison  results  \cite{sturm:II}    directly from the definition  but a priori is less clear to satisfy the  local-to-global property.

We now state the local formulation of $\mathsf{CD}(K,N)$.

\begin{definition}[$\CD_{loc}$ condition]\label{D:loc}
An essentially non-branching m.m.s. $(X,\sfd,\mm)$ satisfies $\CD_{loc}(K,N)$ if for any point $x \in X$ there exists a neighbourhood $X(x)$ of $x$ such that for each pair 
$\mu_{0}, \mu_{1} \in \mathcal{P}_{2}(X,\sfd,\mm)$ supported in $X(x)$
there exists $\nu \in \Opt(\mu_{0},\mu_{1})$ such that \eqref{E:CD} holds true for all $t \in [0,1]$.
The support of $({\rm e}_{t})_\sharp \, \nu$ is not necessarily contained in the neighbourhood $X(x)$.
\end{definition}

Clearly $\RCD(K,N)$-spaces also satisfy the $\CD_{loc}(K,N)$-condition, for $N>1$. \\

Given a m.m.s. $(X,\sfd,\mm)$ as above and  a Borel subset $E\subset X$, let $E^{\rho}$ denote the $\rho$-tubular neighbourhood 
$$
E^{\rho}:=\{x \in X \,:\, \exists y \in E \text{ such that } \sfd(x,y) < \rho \}. 
$$
The outer Minkowski  content  $\mm^+(E)$, which should be seen as the ``boundary measure'',  is defined by
\begin{equation}\label{eq:MinkCont}
\mm^+(E):=\liminf_{\rho\downarrow 0} \frac{\mm(E^\rho)-\mm(E)}{\rho}.
\end{equation}
The isoperimetric profile function $\cI_{(X,\sfd,\mm)}:[0,1]\to \R^{+}$  of $(X,\sfd,\mm)$ is defined by
$$
\cI_{(X,\sfd,\mm)}(v):=\inf \{ \mm^{+}(E) \,:\, E \subset X \text{ Borel subset with } \mm(E)=v \}.
$$

\subsection{The model isoperimetric profile function $\cI_{K,N,D}$}\label{SS:IKND}
If $K>0$ and $N\in \N$, by the L\'evy-Gromov isoperimetric inequality  we know that, for $N$-dimensional smooth manifolds having Ricci $\geq K$, the isoperimetric profile function is bounded below by the one of the $N$-dimensional round sphere of the suitable radius. In other words  the \emph{model} isoperimetric profile function is the one of ${\mathbb S}^N$. For $N\geq 1, K\in \R$ arbitrary real numbers the situation is  more complicated, and just recently E. Milman  \cite{Mil} discovered the model isoperimetric profile function.   In this short section we recall its definition.
\\

Given $\delta>0$, set 
\[
\begin{array}{ccc}
 {\rm s}_\delta(t) := \begin{cases}
\sin(\sqrt{\delta} t)/\sqrt{\delta} & \delta > 0 \\
t & \delta = 0 \\
\sinh(\sqrt{-\delta} t)/\sqrt{-\delta} & \delta < 0
\end{cases}

& , &

 {\rm c}_\delta(t) := \begin{cases}
\cos(\sqrt{\delta} t) & \delta > 0 \\
1 & \delta = 0 \\
\cosh(\sqrt{-\delta} t) & \delta < 0
\end{cases}
\end{array} ~.
\]
Given a continuous function $f :\R \to  \R$ with $f(0) \geq 0$, we denote with $f_+ : \R \to \R^+ $ the function coinciding with $f$ between its first non-positive and first positive roots, and vanishing everywhere else, i.e. $f_+ := f \chi_{[\xi_{-},\xi_{+}]}$ with $\xi_{-} = \sup\{\xi \leq 0; f(\xi) = 0\}$ and $\xi_{+} = \inf\{\xi > 0; f(\xi) = 0\}$.

Given $H,K \in \R$ and $N \in [1,\infty)$, set $\delta := K / (N-1)$  and define the following (Jacobian) function of $t \in \R$:
\[
{\mathcal J}_{H,K,N}(t) :=
\begin{cases}
\chi_{\{t=0\}}  & N = 1 , K > 0 \\
\chi_{\{H t \geq 0}\} & N = 1 , K \leq 0 \\
\left({\rm c}_\delta(t) + \frac{H}{N-1} {\rm s}_\delta(t)\right)_+^{N-1} & N \in (1,\infty) \\
\end{cases} ~.
\]
As last piece of notation, given a non-negative integrable function $f$ on a closed interval $L \subset \R$, we denote with $\mu_{f,L}$  
the probability measure supported in $L$ with density (with respect to the Lebesgue measure) proportional to $f$ there. In order to simplify a bit the notation we will write
$\cI_{(L,f)}$ in place of $\cI_{(L,\, |\cdot|, \mu_{f,L})}$.
\\The model isoperimetric profile for spaces having Ricci $\geq K$, for some $K\in \R$, dimension bounded above by $N\geq 1$ and diameter at most $D\in (0,\infty]$ is then defined by
\begin{equation}\label{eq:defIKND}
\cI_{K,N,D}(v):=\inf_{H\in \R,a\in [0,D]} \cI_{\left([-a,D-a], {\mathcal J}_{H,K,N}\right)} (v), \quad \forall v \in [0,1]. 
\end{equation}
The formula above has the advantage of considering all the possible cases in just one equation, 
but  probably it is  also useful to  isolate the different cases in a more explicit way. Indeed one can check \cite[Section 4] {Mil} that:
\begin{itemize}

\item \textbf{Case 1}: $K>0$ and $D<\sqrt{\frac{N-1}{K}} \pi$,
$$
\cI_{K,N,D}(v) =   \inf_{\xi \in \big[0, \sqrt{\frac{N-1}{K}} \pi -D\big]} \cI_{\big( [\xi,\xi +D],  \sin( \sqrt{\frac{K}{N-1}} t)^{N-1} \big)}(v), \quad \forall v \in [0,1] ~. 
$$

\item \textbf{Case 2}:   $K > 0$ and $D \geq \sqrt{\frac{N-1}{K}} \pi$, 
$$
\cI_{K,N,D}(v) = \cI_{ \big( [0, \sqrt{\frac{N-1}{K}} \pi],  \sin( \sqrt{\frac{K}{N-1}} t)^{N-1}   \big)}(v), \quad \forall v \in [0,1] ~. 
$$

\item  \textbf{Case 3}: $K=0$ and $D<\infty$,
\begin{eqnarray*}
\cI_{K,N,D}(v)&=&  \min \left \{ \begin{array}{l}  \inf_{\xi \geq 0} \cI_{([\xi,\xi+D],  t^{N-1})}(v) ~,\\
\phantom{\inf_{\xi \in \R}} \cI_{([0,D],1)}(v)
\end{array}
\right \} \\
&=& \frac{N}{D} \inf_{\xi \geq 0}  \frac{\left(\min(v,1-v) (\xi+1)^{N} + \max(v,1-v) \xi^{N}\right)^{\frac{N-1}{N}}}{(\xi+1)^{N} - \xi^{N}},  \quad \forall v \in [0,1] ~. 
\end{eqnarray*}

\item \textbf{Case 4}:  $K < 0$, $D<\infty$:
$$
\cI_{K,N,D}(v)=  \min \left \{ \begin{array}{l}
\inf_{\xi \geq 0} \cI_{\big([\xi,\xi+D], \; \sinh(\sqrt{\frac{-K}{N-1}} t)^{N-1}\big)}(v) ~ , \\
\phantom{\inf_{\xi \in \R}} \cI_{\big([0,D],   \exp(\sqrt{-K (N-1)} t) \big)} (v) ~, \\
\inf_{\xi \in \R} \cI_{\big([\xi,\xi+D], \; \cosh(\sqrt{\frac{-K}{N-1}} t)^{N-1}\big)} (v)
\end{array}
\right \} \quad \forall v \in [0,1]  ~.
$$
\item In all the remaining cases,  that is for $K\leq 0, D=\infty$,  the model profile trivializes: $\cI_{K,N.D}(v)=0$ for every $v \in [0,1].$
\end{itemize}
\medskip
Note that when $N$ is an integer, $$\cI_{\big( [0,  \sqrt{\frac{N-1}{K}} \pi ], ( \sin(\sqrt{\frac{K}{N-1} } t)^{N-1}\big)} = \cI_{({\mathbb S}^{N}, g^K_{can}, \mu^K_{can})}$$
 by the isoperimetric inequality on the sphere, and so Case 2 with $N$ integer corresponds to L\'evy-Gromov isoperimetric inequality. 
 \\From the definition of $\cI_{K,N,D}$  and of $\bar{r}_{K,N}$ one can also check that
\begin{equation}\label{eq:IKNvN-1Nprel}
\cI_{K,N,\bar{r}_{K,N}} (v)  = N \omega_{N}^{1/N}  \; v^{\frac{N-1}{N}}+O_{K,N}(v^{\frac{3(N-1)}{N}}), \quad \forall v \in [0, 1], 
\end{equation}
where $(0,1]\ni t\mapsto O_{K,N}(t) \in \R$ is a smooth function depending smoothly on $K,N$ and satisfying 
\begin{equation}\label{eq:limsupoKNprel}
\limsup_{t\downarrow 0} \frac{|O_{K,N}(t)|}{t}< \infty.
\end{equation}
To this aim observe  that, for small volumes $v$, the minimum in Case 3  is achieved by    $\inf_{\xi \geq 0} \cI_{([\xi,\xi+\bar{r}_{0,N}],  t^{N-1})}(v)$ which for small $v$ coincides with $\cI_{([0, \bar{r}_{0,N}],  t^{N-1})}(v)$,
and in Case 4 the minimum is achieved by  $\inf_{\xi \geq 0} \cI_{\big([\xi,\xi+\bar{r}_{K,N}], \; \sinh(\sqrt{\frac{-K}{N-1}} t)^{N-1}\big)}(v)$ which for small $v$ coincides with $\cI_{\big([0, \bar{r}_{K,N}], \; \sinh(\sqrt{\frac{-K}{N-1}} t)^{N-1}\big)}(v)$. 
The two claims \eqref{eq:IKNvN-1Nprel} and \eqref{eq:limsupoKNprel} follow then  from  such  explicit expressions (and from Case 2, if $K>0$) of $\cI_{K,N,\bar{r}_{K,N}} (v)$ for small $v$.  
\\From such explicit expressions it is also clear that the map  $\R\times \R_{>1} \times \R_{>0}\ni(K,N,D) \mapsto \cI_{K,N,D}(v)$ for fixed $v>0$ sufficiently small  is smooth (actually $C^{1}$ will be enough for our purposes). 

Note that when  $N\in \N, N\geq 2$, for small $v>0$ it holds $\cI_{K,N,\bar{r}_{K,N}}(v)=|\partial B^{{\mathbb M}^{N}_{K/(N-1)}}(v)|$, where $|\partial B^{{\mathbb M}^{N}_{K/(N-1)}}(v)|$  denotes the $N-1$-dimensional boundary measure of the metric ball of volume $v$ inside the model space ${\mathbb M}^{N}_{K/(N-1)}$; in this case the two claims \eqref{eq:IKNvN-1Nprel} and \eqref{eq:limsupoKNprel}  follow then by standard computations in geodesic normal coordinates.

\section{$L^{1}$-localization method}\label{Sec:Loc}
For the rest of the paper  $(X,\sfd,\mm)$ will  be  an essentially non-branching geodesic  metric space  endowed with a non-negative Borel measure. For a fixed $\bar{x}\in X$, we will  assume  that $B_{4\bar{r}_{K,N}}(\bar{x})$ is relatively compact and that $B_{4\bar{r}_{K,N}}(\bar{x})\subset \supp(\mm), \mm(B_{4\bar{r}_{K,N}}(\bar{x}))< \infty$. 
   Thanks to Lemma \ref{lem:2delta} below, all our arguments will be related to the ball $B_{4\bar{r}_{K,N}}(\bar{x})$ (for sake of clarity let us mention that for the first part of the arguments after Lemma \ref{lem:2delta}, considering the ball  $B_{2\bar{r}_{K,N}}(\bar{x})$ would suffice; the larger ball $B_{4\bar{r}_{K,N}}(\bar{x})$ will come up naturally later in \eqref{eq:Ball4r}). In particular we will not need assumptions on the geometry of $(X,\sfd,\mm)$ outside of $B_{4\bar{r}_{K,N}}(\bar{x})$, such as (global) properness and completeness. Also the assumption that $(X,\sfd)$ is a geodesic space can be further relaxed by asking that any two points inside $B_{\bar{r}_{K,N}}(\bar{x})$ are joined by a geodesic in $(X,\sfd)$, and the assumption that $(X,\sfd,\mm)$ is essentially non-branching can be further relaxed to ask that  for any $\mu_{0},\mu_{1} \in \mathcal{P}_{2}(X)$,
with $\mu_{0},\mu_{1}$ absolutely continuous with respect to $\mm$ and $\supp(\mu_{0}), \supp(\mu_{1}) \subset B_{\bar{r}_{K,N}}(\bar{x})$, any element of $\Opt(\mu_{0},\mu_{1})$ is concentrated on a set of non-branching geodesics.
\\

Recall that we denote with $\bar{r}_{K,N}>0$ the unique radius such that ${\rm Vol}_{K,N}(\bar{r}_{K,N})=1$. For ease of notation we write $\bar{r}$ in place of $\bar{r}_{K, N}$ and 
we assume that 
\begin{equation}\label{eq:AssB}
\mm(B_{\bar{r}}(\bar{x})) \geq 1-\eta \quad \text{and} \quad B_{4\bar{r}}(\bar{x}) \text{ is  $\CD_{loc}(K-\varepsilon,N)$},
\end{equation}
for some $\varepsilon \in [0, \bar{\varepsilon}_{K,N}]$ with  $\bar{\varepsilon}_{K,N}\leq \frac {\bar{r}}{10}$ to be determined later just depending on $K,N$. 

We also fix the rescaled measure
\begin{equation}\label{eq:defbarmm}
\bar{\mm}:=\frac{1}{\mm(B_{\bar{r} + 2 \delta}(\bar{x}))}  \; \mm \llcorner  B_{\bar{r} + 2 \delta}(\bar{x}).
\end{equation}
where $\delta\in [0, \bar{\delta}_{K,N}]$, with $\bar{\delta}_{K,N}\leq \frac{\bar{r}}{10}$ to be fixed later depending just on $K,N$.

In order to localize the almost-euclidean isoperimetric inequality, 
we will consider an $L^{1}$-optimal transportation problem between probability measures $\mu_{0},\mu_{1}$ absolutely continuous with respect to $\bar \mm$;
in particular we will consider 
\begin{equation}\label{E:measures}
\supp(\mu_{0}) \subset \overline{B_{\delta}(\bar x)}\quad \text{and}\quad \supp(\mu_{1}) \subset \overline{B_{\bar r}(\bar x)}.
\end{equation}
The previous choice of $\bar \mm$ is indeed motivated by the following simple fact.

\begin{lemma}\label{lem:2delta}
Let $(X,\sfd)$ be a metric space and fix  $\delta \leq \bar{r}/2$, $\bar{x}\in X$. For any $x \in B_{\delta}(\bar{x}), y \in B_{\bar{r}}(\bar{x})$, any geodesic  from $x$ to $y$ has length at most $\bar{r}+\delta$ and  is contained in $B_{\bar{r}+2 \delta}(\bar{x})$.
\end{lemma}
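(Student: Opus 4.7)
The statement is a direct application of the triangle inequality, so my plan is essentially a two-step computation.

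First, for the length bound, I would apply the triangle inequality to the endpoints: since $x \in B_{\delta}(\bar{x})$ and $y \in B_{\bar{r}}(\bar{x})$, we have
\[
\sfd(x,y) \;\leq\; \sfd(x,\bar{x}) + \sfd(\bar{x},y) \;<\; \delta + \bar{r}.
\]
Because a geodesic $\gamma$ joining $x$ to $y$ has length exactly $\sfd(x,y)$, the length bound $\ell(\gamma) \leq \bar{r}+\delta$ follows. (Note the assumption $\delta \leq \bar{r}/2$ is not actually needed for this first bound.)

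Second, for the containment, let $z$ be any point on such a geodesic. Since $\gamma$ is length-minimizing from $x$ to $y$, we have $\sfd(x,z) \leq \sfd(x,y) < \bar{r}+\delta$. Applying the triangle inequality once more,
\[
\sfd(\bar{x}, z) \;\leq\; \sfd(\bar{x}, x) + \sfd(x, z) \;<\; \delta + (\bar{r}+\delta) \;=\; \bar{r} + 2\delta,
\]
so $z \in B_{\bar{r}+2\delta}(\bar{x})$, as required.

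There is no real obstacle here; the lemma is a straightforward triangle inequality argument. Its purpose is simply to justify the choice of reference ball $B_{\bar{r}+2\delta}(\bar{x})$ in the definition \eqref{eq:defbarmm} of $\bar{\mm}$: whenever one transports mass from $\overline{B_{\delta}(\bar{x})}$ to $\overline{B_{\bar{r}}(\bar{x})}$ along geodesics, the entire transport stays inside $B_{\bar{r}+2\delta}(\bar{x}) \subset B_{4\bar{r}}(\bar{x})$ (using $\delta \leq \bar{r}/2$ for the final inclusion), so only the local $\CD_{loc}(K-\varepsilon, N)$ assumption inside $B_{4\bar{r}_{K,N}}(\bar{x})$ will ever be invoked.
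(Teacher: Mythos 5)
Your proof is correct and takes essentially the same approach as the paper: both bounds are obtained by direct applications of the triangle inequality. If anything, your argument for the containment is slightly cleaner than the one in the paper, which introduces an unnecessary case distinction (``if $\bar{x}$ is a point of $\gamma_{xy}$\dots otherwise\dots''); your observation that $\sfd(x,z)\leq \sfd(x,y)$ for any point $z$ on the geodesic makes that distinction superfluous. Your closing remark about the role of $\delta \leq \bar{r}/2$ (only needed for the inclusion $B_{\bar{r}+2\delta}(\bar{x}) \subset B_{4\bar{r}}(\bar{x})$, not for the lemma's two conclusions) correctly identifies the purpose of the hypothesis.
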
 

\begin{proof}
Let $\gamma_{xy}$ be any  geodesic with $\gamma_{xy}(0)=x \in B_{\delta}(\bar{x})$ and  $\gamma_{xy}(1)=y \in B_{\bar{r}}(\bar{x})$. Then by  the triangle inequality  we have
\begin{align*}
\textrm{Length}(\gamma_{xy}) = \sfd(x,y) \leq \sfd(x,\bar{x})+\sfd(\bar{x},y) \leq \delta+ \bar{r}.
\end{align*}
If $\bar x$ is a point of $\gamma_{xy}$, then the second part of the claim follows, otherwise for every $t \in [0,1]$:
\begin{align*}
\sfd(\bar{x}, \gamma_{xy}(t)) < \sfd(\bar{x}, x)+ \sfd(x, \gamma_{xy}(t)) \leq \delta+ \textrm{Length}(\gamma_{xy})  \leq 2 \delta+ \bar{r}.
\end{align*}
\end{proof}

Since $\mm(B_{2\bar r}(\bar x)) < \infty$, both $\mu_{0}$ and $\mu_{1}$ have finite first moment; then from Kantorovich duality (see \cite[Theorem 5.10]{Vil}) 
there exists a $1$-Lipschitz function $\f: X \to R$ such that $\bar \pi \in \Pi(\mu_{0},\mu_{1})$ is $L^{1}$-optimal, i.e. for any $\pi \in \Pi(\mu_{0},\mu_{1})$
$$
\int_{X} \sfd(x,y) \bar \pi (dxdy) \leq \int_{X} \sfd(x,y) \pi (dxdy),  
$$
if and only if $\bar \pi (\Gamma) = 1$, where 
\begin{equation}\label{E:Gamma}
\Gamma : = \{ (x,y) \in X\times X \colon \f(x) - \f(y) = \sfd(x,y)  \}.
\end{equation}
From Kantorovich duality (see \cite[Theorem 5.10]{Vil}) we also deduce the existence of an optimal $\bar \pi \in \Pi(\mu_{0},\mu_{1})$.
The set $\Gamma$ permits to perform a well-known dimensional reduction of $\bar \pi$ obtaining a family of one-dimensional transport plans.
Such a reduction has a very long history and we refer to \cite{CM,klartag} for more details.

%We specilize the previous general considerations to the particular case of a family of $L^{1}$-optimal transportation problems between $\mu_{0}$ and $\mu_{1}$
%verifying \eqref{E:measures}; we will then be able to observe that the equivalence classes coming from Corollary \ref{C:equivalence}
%can be considered to be isometric to convex subsets of the real line.

In order to obtain an almost Euclidean isoperimetric inequality, we specialize the general $L^{1}$-optimal transportation problem as follows.
Given a generic Borel subsets $E \subset B_{\delta}(\bar{x}) \subset B_{\bar r}(\bar x)$  with $\bar{\mm}(E)>0$, we look for  a ``localization'' of
the following function with zero mean
\begin{equation}\label{eq:deffA}
f_{E}(x):=\chi_{E}(x)-\frac{\bar{\mm}(E)}{\bar{\mm}(B_{\bar{r}} (\bar{x}))}  \chi_{B_{\bar{r}} (\bar{x})} (x);
\end{equation}
note indeed that trivially $\int f_{E} \bar \mm = 0$. Denoting $f^{+}_{E}:=\max\{f_{E}, 0\},  f^{-}_{E}:=\max\{-f_{E}, 0\}$ the positive and negative parts of $f_{E}$, set
$$
\int_{X} f_{E}^{+} \, \bar{\mm} =  \int_{X} f_{E}^{-} \, \bar{\mm}=: c_{E}>0.
$$
Then to obtain a localization of the aforementioned function, one studies the $L^{1}$ optimal transportation problem between the following marginal measures
\begin{equation}\label{eq:mu0mu1}
\mu_{0}:=\frac{1}{c_{E}} f_{E}^{+} \bar{\mm}  \in \mathcal{P}(X),  \; \mu_{1}:=\frac{1}{c_{E}} f_{E}^{-} \bar{\mm} \in \mathcal{P}(X), \quad \text{with } \mu_{0}(E)=1=\mu_{1}(B_{\bar{r}} (\bar{x})\setminus E).
\end{equation}

From now on we will concentrate our analysis on this particular case and we consider fixed $E \subset B_{\delta}(\bar x)$ and an associated Kantorovich potential $\f:X \to \R$ 
for the $L^{1}$ optimal transportation problem from $\mu_{0}$ to $\mu_{1}$.
To localize the analysis, is it convenient to consider the following subset of the $\Gamma$ (recall \eqref{E:Gamma}):
$$
\Gamma_{\delta,r} : = \Gamma \cap \overline{B_{\delta}(\bar x)} \times \overline{B_{\bar r}(\bar x)}. 
$$
To keep the ray structure, we fill in possible gaps of $\Gamma_{\delta,r}$ by considering the set $\bar \Gamma_{\delta,r}$ defined as follows: 
\begin{equation}\label{E:barGammadeltar}
\bar \Gamma_{\delta,r} : = \{ (\gamma_{s},\gamma_{t}) \colon \gamma \in \Geo, 0\leq s \leq t \leq 1, (\gamma_{0},\gamma_{1}) \subset \Gamma_{\delta,r} 
%\footnote{AM: nella versione precedente c'era $\bar{\Gamma}$ invece di $\Gamma_{\delta,r} $, penso sia stato un typo. }  
\}.
\end{equation}
It is readily verified that $\bar \Gamma_{\delta,r} \subset \Gamma$ and it is a closed set. Then to keep notation simple we will denote $\bar \Gamma_{\delta,r}$ simply 
with $\Gamma$.

\begin{remark}\label{R:partialorder}
Note that $\Gamma$ induces a partial order relation: if $(x,y), (y,z) \in \Gamma$ then $ (x,z)\in \Gamma$; indeed
from the definition of $\Gamma$, there exists $\gamma^{1}, \gamma^{2} \in \Geo(X)$  and $s_{1},s_{2},t_{1},t_{2} \in [0,1]$ with $s_{i} \leq t_{i}$ for $i = 1,2$ such that 
$$
(\gamma_{0}^{1},\gamma_{1}^{1}), (\gamma_{0}^{2},\gamma_{1}^{2}) \in \Gamma_{\delta,r},\quad \gamma_{s_{1}}^{1} = x,\gamma_{t_{1}}^{1} = y,  \gamma_{s_{2}}^{2} = y, \gamma_{t_{2}}^{2}=z.
$$
Since
$$
\sfd(x,y)+\sfd(y,z) = \f(x) - \f(y)+ \f(y)-\f(z) = \f(x) - \f(z) = \sfd(x,z),
$$
it follows that gluing (and reparametrizing) $\gamma^{1}\llcorner_{[0,t_{1}]}$ with $\gamma^{2}\llcorner_{[s_{2},1]}$ produces a constant speed geodesic $\eta$ such that 
$(\eta_{0},\eta_{1}) \in \Gamma_{\delta,r}$ and with $\eta_{s} = x$ and $\eta_{t} = z$ for some $0 \leq s \leq t \leq 1$. Hence $(x,z) \in \Gamma$ and the claim follows.
\end{remark}

Next we consider the classical \emph{transport relation} 
$$R : = \Gamma \cup \Gamma^{-1}, \quad   \text{where } \Gamma^{-1} = \{(x,y) \in X\times X \colon (y,x) \in \Gamma\}$$
 and the associated \emph{transport set}
$$
\T: =  P_{1} (R \setminus \{ x = y\}).
$$
Since $B_{4\bar{r}}(\bar{x})$ is relatively compact, $\Gamma$ is $\sigma$-compact (countable union of compact sets) and therefore $\T$ is $\sigma$-compact as well.

\subsection{Branching structure} 

We now remove the set of branching points so to obtain a partition of $\T$ into a family of one-dimensional sets
isometric to real intervals. 

%As we do have  a curvature lower  bound only inside $B_{2r}(\bar x)$, we will be able to show that such branching appears almost nowhere 
%only inside a smaller ball.

The \emph{set of branching points} is formed by the union of the following sets
\begin{align*}
A_{+} : =&~ \{ x\in \T \colon \exists z,w \in \T, (x,z), (x,w) \in \Gamma, (z,w) \notin R \}, \\
A_{-}: = &~ \{ x\in \T \colon \exists z,w \in \T, (x,z), (x,w) \in \Gamma^{-1}, (z,w) \notin R\}.
\end{align*}
It is not difficult to check that $A_{\pm}$ are projections of $\sigma$-compact sets. The set of branching points will be denoted with $A$.

\begin{remark}\label{R:branch}
We observe the following simple fact about branching points whose proof follows straightforwardly from Remark \ref{R:partialorder}: 
if $x \notin A_{+}$ and $(x,y) \in \Gamma$, then $y \notin A_{+}$. 
The symmetric statement is also valid: if $x \notin A_{-}$ and $(y,x) \in \Gamma$, then $y \notin A_{-}$.
\end{remark}

We introduce the following notation: $\Gamma(x) : = \{ y \in X \colon (x,y) \in \Gamma \}$. More in general, if $F \subset X \times X$, $F(x) : = P_{2}(F \cap \{x\} \times X)$, where 
$P_{2}$ is the projection on the second entry.

\begin{lemma}\label{L:branch}
For each $x \in \T \setminus A_{+}$, the map $\f : \Gamma(x) \to \R$ is an isometry.
\end{lemma}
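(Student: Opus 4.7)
The plan is to unwind the definition of $A_+$ directly: the non-branching condition forces any two points in $\Gamma(x)$ to be comparable in the order induced by $\Gamma$, and then the defining equation of $\Gamma$ gives the isometric identity automatically.

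First I would fix $x \in \T \setminus A_+$ and pick arbitrary $y,z \in \Gamma(x)$. The cases $y=z$, $y=x$, or $z=x$ are trivial, since for any $w \in \Gamma(x)$ the definition of $\Gamma$ gives $\varphi(x)-\varphi(w) = \sfd(x,w)$; so I may assume $y,z \in \Gamma(x)$ are pairwise distinct from each other and from $x$. Next I would check that $y,z \in \T$: from $(x,y) \in \Gamma$ with $x\neq y$ we get $(y,x) \in \Gamma^{-1} \subset R$ with $y\neq x$, hence $y = P_1((y,x)) \in \T$, and likewise $z \in \T$.

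Having placed $y,z$ in $\T$, I would invoke the hypothesis $x \notin A_+$. The defining property of $A_+$ says that if $x \in A_+$ there exist $z',w' \in \T$ with $(x,z'),(x,w') \in \Gamma$ and $(z',w') \notin R$; taking its contrapositive for the pair $y,z$ yields $(y,z) \in R = \Gamma \cup \Gamma^{-1}$. In the case $(y,z) \in \Gamma$ the definition of $\Gamma$ gives $\varphi(y)-\varphi(z) = \sfd(y,z)$, and symmetrically if $(z,y) \in \Gamma$; in either event $|\varphi(y)-\varphi(z)| = \sfd(y,z)$. Combined with the global $1$-Lipschitz property of $\varphi$, this shows $\varphi \colon \Gamma(x) \to \R$ is an isometry onto its image.

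There isn't really a hard step here; the lemma is a tautological consequence of the definitions of $\Gamma$, $R$, and $A_+$. The only mild care needed is in verifying that the auxiliary points $y,z$ belong to $\T$ so that the definition of $A_+$ is actually applicable, and in separating out the degenerate cases $y=z$ or $\{y,z\} \cap \{x\} \neq \emptyset$ for which the isometric identity is immediate from the definition of $\Gamma$ itself.
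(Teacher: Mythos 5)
Your proof is correct, and it is noticeably more direct than the paper's. At bottom both arguments use the same observation, namely the contrapositive of the membership condition defining $A_{+}$: if $x \notin A_{+}$ and $z',w' \in \T$ with $(x,z'),(x,w') \in \Gamma$, then $(z',w') \in R$. You apply this observation directly to the pair $(y,z)$ after first checking $y,z \in \T$ (which you do correctly, and which is the only nontrivial bookkeeping), and then read off $|\f(y)-\f(z)| = \sfd(y,z)$ immediately from $R = \Gamma \cup \Gamma^{-1}$. The paper instead takes a longer geometric route: assuming without loss of generality $\f(y) \geq \f(z)$, it selects a geodesic $\gamma$ from $x$ to $z$ with $(\gamma_{s},\gamma_{t}) \in \Gamma$ for all $s \leq t$, uses continuity to produce $\gamma_{t}$ with $\f(\gamma_{t}) = \f(y)$, applies the same $A_{+}$-observation to the pair $(y,\gamma_{t})$ to conclude $\sfd(y,\gamma_{t}) = 0$ hence $y = \gamma_{t}$, and then reads $\f(y) - \f(z) = \sfd(y,z)$ off the geodesic $\gamma$. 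Your route avoids the auxiliary geodesic entirely, which is a genuine simplification in this localized setting where $\Gamma$ actually denotes $\bar\Gamma_{\delta,r}$ and the choice of geodesic requires a bit of care. One minor cosmetic point: your final appeal to the $1$-Lipschitz property of $\f$ is superfluous, since $(y,z) \in R$ already yields the equality $|\f(y)-\f(z)| = \sfd(y,z)$ outright rather than only a one-sided bound.
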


\begin{proof}
Consider $y,z \in  \Gamma(x) $; in particular 
$$
\f(x) - \f(y) = \sfd(x,y), \qquad \f(x) - \f(z) = \sfd(x,z),
$$
and with no loss in generality we assume 
%\footnote{AM: nella versione precendente c'erano le diseguaglianze strette, se uno voleva essere pedante il caso di uguaglianza non era discusso, allora ho cambiato le diseguaglianze strette in in non strette} 
$\sfd(x,y) \leq  \sfd(x,z)$, so that $\f(x) \geq  \f(y) \geq \f(z)$.
Since the space is geodesic, there exists $\gamma \in \Geo(X)$ with $\gamma_{0} = x$ and $\gamma_{1} = z$. Then one easily verifies that $(\gamma_{s},\gamma_{t}) \in \Gamma$ for any 
$0 \leq s \leq t \leq 1$; in particular, by continuity there exists $t \in [0,1]$ such that $\f(\gamma_{t}) = \f(y)$ and $\gamma_{t} \in \Gamma(x)$.
Since $x \notin A_{+}$, necessarily $\gamma_{t} = y$. It follows that 
$$
\f(y) - \f(z) = \sfd(y,z),
$$
and the claim follows.
\end{proof}

Replacing the set of forward branching points with the set of backward branching  points, one can extend Lemma \ref{L:branch} and obtain the next 

\begin{corollary}\label{C:raybranch}
For each $x \in \T \setminus A$, the map $\f : R(x)  \to \R$ is an isometry.
\end{corollary}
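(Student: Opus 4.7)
The plan is to bootstrap Lemma~\ref{L:branch} by treating the forward ray $\Gamma(x)$ and the backward ray $\Gamma^{-1}(x)$ separately, and then gluing them along $x$.

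First I would apply Lemma~\ref{L:branch} directly: since $x \notin A$, in particular $x \notin A_{+}$, so $\f : \Gamma(x) \to \R$ is an isometry. Next, I would run the symmetric argument with the roles of $\Gamma$ and $\Gamma^{-1}$ swapped (i.e.\ replacing $\f$ by $-\f$, which is also a $1$-Lipschitz Kantorovich potential for the reversed problem and satisfies $-\f(y) - (-\f(x)) = \sfd(x,y)$ whenever $(y,x) \in \Gamma$). Because $x \notin A_{-}$ as well, the very same proof as Lemma~\ref{L:branch} shows that $\f : \Gamma^{-1}(x) \to \R$ is an isometry.

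It then remains to control distances between a point $y \in \Gamma(x)$ and a point $z \in \Gamma^{-1}(x)$. Here I would invoke the transitivity of $\Gamma$ established in Remark~\ref{R:partialorder}: from $(z,x) \in \Gamma$ and $(x,y) \in \Gamma$ it follows that $(z,y) \in \Gamma$, hence
\[
\sfd(z,y) \;=\; \f(z) - \f(y) \;=\; (\f(z) - \f(x)) + (\f(x) - \f(y)) \;=\; \sfd(z,x) + \sfd(x,y),
\]
which in particular gives $|\f(z) - \f(y)| = \sfd(z,y)$. Combining the three cases (both points in $\Gamma(x)$, both in $\Gamma^{-1}(x)$, or one on each side) yields that $\f : R(x) \to \R$ preserves distances, i.e.\ is an isometry onto its image.

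The only subtlety I anticipate is purely notational: making sure the ``reversed'' version of Lemma~\ref{L:branch} does apply here, i.e.\ that the same geodesic interpolation argument (filling in a geodesic from $x$ to the farther of two points in $\Gamma^{-1}(x)$ and using the absence of backward branching at $x$) goes through verbatim. This is immediate once one observes that $\bar\Gamma_{\delta,r}$ is symmetric with respect to reversal of geodesic parametrisations, so the whole proof of Lemma~\ref{L:branch} transports to $\Gamma^{-1}$ without modification. Everything else is just the transitivity observation.
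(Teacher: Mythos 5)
Your proposal is correct and follows essentially the same route as the paper: split $R(x)$ into $\Gamma(x)$ and $\Gamma^{-1}(x)$, apply Lemma~\ref{L:branch} and its symmetric version to each piece, then handle the mixed pair. The only cosmetic difference is in the mixed case: you invoke the transitivity of $\Gamma$ from Remark~\ref{R:partialorder} to conclude $(z,y)\in\Gamma$, whereas the paper squeezes directly via $\sfd(y,z)\geq \f(y)-\f(z)=\sfd(y,x)+\sfd(x,z)\geq\sfd(y,z)$ using only $1$-Lipschitzness and the triangle inequality; both yield the same equality chain.
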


\begin{proof}
It has already been shown that if $x \in \T \setminus A$, then $\f : \Gamma(x) \to \R$ and $\f : \Gamma^{-1}(x)  \to \R$ are both isometries. 
To conclude note that for $y,z \in R(x) $ say with $y \in \Gamma^{-1}(x)$ and $z \in \Gamma(x)$
$$
\sfd(y,z) \geq \f(y) - \f(z) = \f(y) - \f(x) + \f(x) - \f(z) = \sfd(y,x) + \sfd(x,z) \geq \sfd(y,z),
$$
yielding the claim.
\end{proof}

We can then define the \emph{transport set without branching points} $\T^{b} := \T \setminus A$; clearly, the set $\T^{b}$ is Borel.

\begin{corollary}\label{C:equivalence}
 The set $R$ is an equivalence relation over $\T^{b}$.
\end{corollary}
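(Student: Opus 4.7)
The plan is to verify the three properties of an equivalence relation directly from the definitions, noting that the only nontrivial issue is transitivity, which is handled by a short case analysis based on the absence of branching at intermediate points.

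\emph{Reflexivity and symmetry.} For any $x \in \T^{b}$, one has $\varphi(x) - \varphi(x) = 0 = \sfd(x,x)$, so $(x,x) \in \Gamma \subset R$. Symmetry of $R$ is built into its definition $R = \Gamma \cup \Gamma^{-1}$.

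\emph{Transitivity.} Fix $x,y,z \in \T^{b}$ with $(x,y), (y,z) \in R$. I would split into four cases depending on the direction of each pair.

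\textbf{(i)} If $(x,y), (y,z) \in \Gamma$, then Remark \ref{R:partialorder} yields $(x,z) \in \Gamma \subset R$.

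\textbf{(ii)} If $(x,y), (y,z) \in \Gamma^{-1}$, then $(z,y),(y,x) \in \Gamma$, so again Remark \ref{R:partialorder} gives $(z,x) \in \Gamma$, i.e., $(x,z) \in R$.

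\textbf{(iii)} If $(x,y) \in \Gamma$ and $(y,z) \in \Gamma^{-1}$, then $(x,y), (z,y) \in \Gamma$, i.e., $(y,x), (y,z) \in \Gamma^{-1}$. Since $y \in \T^{b}$ implies $y \notin A_{-}$, the very definition of $A_{-}$ forces $(x,z) \in R$.

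\textbf{(iv)} If $(x,y) \in \Gamma^{-1}$ and $(y,z) \in \Gamma$, then $(y,x), (y,z) \in \Gamma$, and $y \notin A_{+}$ yields $(x,z) \in R$ by definition of $A_{+}$.

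There is no real obstacle here: all the work has already been done in setting up $\Gamma$, $A_{\pm}$, and Remark \ref{R:partialorder}. The only thing to be careful about is keeping the direction of the pairs straight in each case, so that the correct side ($A_{+}$ or $A_{-}$) of the non-branching property is invoked; this is why one needs $y \notin A_{+} \cup A_{-}$, which is exactly the condition $y \in \T^{b}$. The endpoints $x,z \in \T^{b}$ are not actually needed for transitivity, but they belong to $\T^{b}$ by assumption and so the relation $R$ restricts well to $\T^{b} \times \T^{b}$.
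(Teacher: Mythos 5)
Your proof is correct, but it takes a genuinely different route from the paper's. The paper proves transitivity in one stroke by invoking Corollary \ref{C:raybranch}: for $y \in \T^{b}$, $\f$ restricted to $R(y)$ is an isometry, so $x,z \in R(y)$ forces $|\f(x)-\f(z)| = \sfd(x,z)$ and hence $(x,z) \in R$, with no case distinction at all. You instead split into four cases according to whether each pair lies in $\Gamma$ or $\Gamma^{-1}$, handling cases (i)--(ii) via the partial-order Remark \ref{R:partialorder} and cases (iii)--(iv) by reading off the contrapositive of the definitions of $A_{\pm}$. Both arguments are sound and both ultimately rely on the geodesic property and the removal of branching points, but the paper's version packages everything into the already-established isometry statement and is therefore shorter; your version is more elementary in the sense that it avoids passing through Lemma \ref{L:branch} and Corollary \ref{C:raybranch}, at the cost of a case analysis that essentially re-derives part of what those results encode. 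One small refinement: your remark that $x,z \in \T^{b}$ is not needed is true in spirit, but for cases (iii)--(iv) you do need $x,z \in \T$ in order to apply the definitions of $A_{\pm}$; this holds automatically (if, say, $x = y$ the conclusion is trivial, and otherwise $(x,y) \in R \setminus \{x=y\}$ gives $x \in \T$), but it is worth saying explicitly rather than dismissing the membership hypothesis.
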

\begin{proof}
The only property that requires a proof is the transitivity. So consider $x,y,z \in \T^{b}$ such that $(x,y), (y,z) \in R$. 
Since $y \in \T^{b}$ it follows that from Corollary \ref{C:raybranch} 
that $\f $ restricted to $R(y)$ is an isometry; since $x,z \in R(y)$, necessarily $|\f(x) -\f(z)| = \sfd(x,z)$, and therefore $(x,z) \in R$ and the claim follows.
\end{proof}

\begin{lemma}\label{L:convexity}
For any $x \in \T^{b}$, any $z,w \in \T^{b} \cap R(x)$ and $\gamma \in \Geo(X)$ with $\gamma_{0} =z$ and $\gamma_{1} = w$;  then for any $t \in (0,1)$ 
it holds $\gamma_{t} \in \T^{b} \cap R(x)$.  
In particular, for any $x \in \T^{b}$ the equivalence class $\T^{b} \cap R(x)$ is isometric to an interval.
\end{lemma}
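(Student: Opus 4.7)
The plan is to show, for the first part, that if $z,w\in \T^{b}\cap R(x)$ and $\gamma\in\Geo(X)$ joins $z$ to $w$, then each intermediate point $\gamma_{t}$ lies in $\Gamma(z)\cup\Gamma^{-1}(z)$ and is not a branching point, and then to deduce the ``isometric to an interval'' statement from Corollary \ref{C:raybranch} combined with this convexity.

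First I would reduce to the case $z\neq w$ and, after possibly swapping, assume $(z,w)\in\Gamma$, i.e.\ $\f(z)-\f(w)=\sfd(z,w)$. For $t\in(0,1)$ the $1$-Lipschitz property of $\f$ gives
\begin{equation*}
\f(z)-\f(w)=\sfd(z,w)=\sfd(z,\gamma_{t})+\sfd(\gamma_{t},w)\geq \bigl(\f(z)-\f(\gamma_{t})\bigr)+\bigl(\f(\gamma_{t})-\f(w)\bigr)=\f(z)-\f(w),
\end{equation*}
so equality holds in both summands and hence $\f(z)-\f(\gamma_{t})=\sfd(z,\gamma_{t})$ and $\f(\gamma_{t})-\f(w)=\sfd(\gamma_{t},w)$. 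To upgrade this to membership in $\bar{\Gamma}_{\delta,r}$ (recall $\Gamma$ here denotes $\bar{\Gamma}_{\delta,r}$) I use the definition \eqref{E:barGammadeltar}: since $(z,w)\in\bar{\Gamma}_{\delta,r}$ there is a witnessing $\eta\in\Geo(X)$ with $(\eta_{0},\eta_{1})\in\Gamma_{\delta,r}$ and $z=\eta_{s_{0}}$, $w=\eta_{t_{0}}$ for some $s_{0}\leq t_{0}$. Concatenating the portion of $\eta$ from $\eta_{0}$ to $z$, then $\gamma$ from $z$ to $w$, and finally the portion of $\eta$ from $w$ to $\eta_{1}$, the Kantorovich identity forces total length equal to $\sfd(\eta_{0},\eta_{1})$, so after constant-speed reparametrization I obtain $\tilde{\eta}\in\Geo(X)$ with $(\tilde{\eta}_{0},\tilde{\eta}_{1})=(\eta_{0},\eta_{1})\in\Gamma_{\delta,r}$ passing through $\gamma_{t}$. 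Therefore $(z,\gamma_{t})$ and $(\gamma_{t},w)$ both lie in $\bar{\Gamma}_{\delta,r}=\Gamma$.

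Next I would exclude that $\gamma_{t}$ is a branching point. Since $z\in\T^{b}$, Remark \ref{R:branch} applied to $z\notin A_{+}$ and $(z,\gamma_{t})\in\Gamma$ yields $\gamma_{t}\notin A_{+}$; symmetrically, $w\notin A_{-}$ and $(\gamma_{t},w)\in\Gamma$ yield $\gamma_{t}\notin A_{-}$. Hence $\gamma_{t}\in\T^{b}$. Moreover $(z,\gamma_{t})\in R$, so by Corollary \ref{C:equivalence} $\gamma_{t}\in R(z)=R(x)$, proving $\gamma_{t}\in\T^{b}\cap R(x)$. The potential obstacle — making sure the geodesic $\gamma$ between $z,w$ belongs to some triple witnessing $\bar{\Gamma}_{\delta,r}$ — is precisely where the chosen enlargement \eqref{E:barGammadeltar} of $\Gamma_{\delta,r}$ pays off, because the concatenation argument produces the required witness without requiring any non-branching hypothesis at $\gamma_{t}$.

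For the ``in particular'' statement, Corollary \ref{C:raybranch} says $\f\colon R(x)\to\R$ is an isometry, so $\f$ is injective on $\T^{b}\cap R(x)$ and carries it isometrically onto $\f(\T^{b}\cap R(x))\subset\R$. Given any two values $\f(z)>\f(w)$ in this image, the first part shows every $\gamma_{t}$ lies in $\T^{b}\cap R(x)$, and by the isometry property $\f(\gamma_{t})$ sweeps out the whole interval $[\f(w),\f(z)]$. Hence the image is a (possibly unbounded) interval in $\R$, and $\T^{b}\cap R(x)$ is isometric to it via $\f$.
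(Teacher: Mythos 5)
Your proof is correct and follows essentially the same route as the paper: reduce to $(z,w)\in\Gamma$, show $(z,\gamma_t),(\gamma_t,w)\in\Gamma$, and invoke Remark \ref{R:branch} twice to exclude branching, then Corollary \ref{C:equivalence} to land in $R(x)$. The one place you add genuine detail is the concatenation argument justifying $(z,\gamma_t)\in\bar{\Gamma}_{\delta,r}$ for an \emph{arbitrary} geodesic $\gamma$ from $z$ to $w$ (not merely the restriction of the original witness $\eta$); the paper asserts this with a bare ``hence,'' implicitly relying on the same gluing idea it records in Remark \ref{R:partialorder}, so spelling it out as you do is a worthwhile clarification rather than a divergence.
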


\begin{proof}
First of all from Corollary \ref{C:equivalence} we know that $(z,w)\in R$. Without any loss in generality, we can assume $\f(z) \geq \f(w)$.   
%\footnote{AM: come prima, per includere tutti i casi possibili ho cambiato la diseguaglianza stretta in non stretta, ho anche leggermente rifrasato usando il corollario \ref{C:raybranch}  invece di ripassare via l'isometria }  
It follows that   $(z,w) \in \Gamma$ and hence  $(z,\gamma_{t}) \in \Gamma$.
Since $z \notin A_{+}$, we deduce from Remark \ref{R:branch} that $\gamma_{t} \notin A_{+}$; for the same reason, from $w \notin A_{-}$ it follows that $\gamma_{t} \notin A_{-}$, 
and therefore $\gamma_{t} \in \T^{b}$. To conclude, from Corollary \ref{C:equivalence} we get that $\gamma_{t} \in R(x)$ and the claim is proved.
\end{proof}

\bigskip

It will be convenient to consider also the set $R^{b} : = R \cap \T^{b}\times \T^{b}$, introduced in \cite{CMi}; in particular, $R^{b}$ is Borel.
From Corollary \ref{C:equivalence} and Lemma \ref{L:convexity} we deduce that the equivalence classes of $R^{b}$ 
induce a partition of $\T^{b}$ into a family of subsets, each of them
isometric to an interval. Each element of the partition will be called \emph{transport ray} 
(note the difference with \cite{CMi} where sets called transport rays also satisfied a maximality property) and we will denote such family of sets with $\{ X_{q}\}_{q \in Q}$
where $Q$ is a set of indices; note that by construction $X_{q} \subset B_{\bar r+2\delta} (\bar x)$ and, by definition,
$R^{b}(x) = X_{q}$ for every $x \in X_{q}$.

Once a partition of $\T^{b}$ is given, it is possible to associate a corresponding decomposition of $\bar \mm\llcorner_{\T^{b}}$ into conditional measures, called \emph{disintegration}.  
To obtain such a disintegration of  $\bar \mm\llcorner_{\T^{b}}$ one needs first to induce a reasonable measurable structure over the set of indices $Q$.
This can be obtained constructing a \emph{section} for $R^{b}$: 
a map $f : \T^{b} \to \T^{b}$ is a section for $R^{b}$ if and only if $(x,f(x)) \in R^{b}$ and $f(x) = f(y)$ whenever $(x,y) \in R^{b}$. 
Once a section $f$ is given, one can consider as quotient set $Q: = \{ x \in \T^{b} \colon x = f(x) \}$ and then $f : \T^{b} \to Q$  will be called quotient map.
The following Lemma (already presented in \cite{CM}) permits to construct an explicit section. 
Since the equivalence relation in the present setting is slightly different from \cite{CM}, as here we consider a localized problem, 
for completeness of exposition we include below the detailed construction.
The $\sigma$-algebra generated by analytic sets is denoted by $\mathcal{A}$.

\begin{lemma}\label{lem:Qlevelset}
There exists an $\mathcal{A}$-measurable section $\QQ: \T^{b} \to \T^{b}$ such that the quotient set $Q = \{ x = \QQ(x) \}$ is $\mathcal{A}$-measurable and 
can be written locally  as a level set of $\f$ in the following sense: 
$$
Q = \bigcup_{i\in \N} Q_{i}, \qquad Q_{i} \subset \f^{-1}(\alpha_{i}) \cap \T^{b}, 
$$
for some suitable $\alpha_{i} \in \R$, with $Q_{i}$ $\mathcal{A}$-measurable and $Q_{i} \cap Q_{j} = \emptyset$, for $i\neq j$.
\end{lemma}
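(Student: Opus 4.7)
The plan hinges on the observation that for each $x\in \T^{b}$, Corollary \ref{C:raybranch} combined with Lemma \ref{L:convexity} tells us that $\f$ maps the equivalence class $R^{b}(x)$ isometrically onto an interval $I(x)\subset \R$. In particular, for any level $\alpha\in \R$, the intersection $\f^{-1}(\alpha)\cap R^{b}(x)$ contains at most one point. The strategy is therefore to pick a countable dense subset of $\R$ and build a section $Q$ by selecting, on each ray, the point corresponding to the \emph{first} level (in some fixed enumeration) that the ray crosses.

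Concretely, I would fix an enumeration $\{\alpha_{i}\}_{i\in \N}$ of $\Q$ and, for each $i$, set
\[
C_{i} := \bigl\{x\in \T^{b} \colon \alpha_{i}\in \f(R^{b}(x))\bigr\} = P_{1}\bigl(R^{b}\cap (\T^{b}\times \f^{-1}(\alpha_{i}))\bigr).
\]
Since $R^{b}$ is Borel (as recalled just before Lemma \ref{L:convexity}) and $\f^{-1}(\alpha_{i})$ is closed, each $C_{i}$ is the projection of a Borel set, hence analytic, and in particular it lies in $\mathcal{A}$. Next define
\[
Q_{i} := \bigl(\f^{-1}(\alpha_{i})\cap \T^{b}\bigr)\setminus \bigcup_{j<i} C_{j},\qquad Q:=\bigcup_{i\in \N} Q_{i}.
\]
Then $Q_{i}\subset \f^{-1}(\alpha_{i})\cap \T^{b}$, the family is pairwise disjoint because the $Q_{i}$ sit on distinct level sets of $\f$, and each $Q_{i}$ is $\mathcal{A}$-measurable. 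For any $x\in \T^{b}$ with $R^{b}(x)$ non-trivial, the interval $I(x)$ has positive length and therefore meets some $\alpha_{i}$; letting $i(x)$ be the smallest such index, the unique point $y\in R^{b}(x)$ with $\f(y)=\alpha_{i(x)}$ belongs to $Q_{i(x)}$ and is the unique element of $Q\cap R^{b}(x)$. One then sets $\QQ(x):=y$.

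To conclude, $\QQ$ is $\mathcal{A}$-measurable because its graph coincides with $R^{b}\cap (\T^{b}\times Q)$, so that for any Borel $B\subset X$ one has $\QQ^{-1}(B)=P_{1}(R^{b}\cap (\T^{b}\times (Q\cap B)))\in \mathcal{A}$. The main technical obstacle is the descriptive set-theoretic bookkeeping, namely checking at each step that projections, countable unions, and intersections with Borel sets preserve membership in the $\sigma$-algebra $\mathcal{A}$ generated by analytic sets; once this is in place the geometric picture is completely transparent thanks to Corollary \ref{C:raybranch}. A minor further point is the treatment of degenerate equivalence classes of the form $\{x\}$: these can be absorbed into the construction by letting $\QQ$ act as the identity on the $\mathcal{A}$-measurable set of such isolated points, which does not spoil the level-set structure of the decomposition.
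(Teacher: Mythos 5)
Your construction is essentially correct and takes a genuinely cleaner route than the paper. The paper stratifies $\T^{b}$ by ray length into the sets $\T^{b}_{n}$ (rays of length at least $1/n$), then for each $n$ finds rationals $\alpha$ whose level set is crossed with quantitative margin $1/(3n)$, and finally disjointifies; you bypass the stratification entirely by fixing an enumeration of $\Q$ and selecting, on each ray, the point at the \emph{first} rational level the ray crosses. The verification that $Q\cap R^{b}(x)=\{\QQ(x)\}$ goes through exactly as you describe, and the family $\{Q_i\}$ is manifestly disjoint with $Q_i\subset\f^{-1}(\alpha_i)\cap\T^{b}$. This is the same geometric idea as the paper's, with less bookkeeping.

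Two points need tightening. First, the measurability step is stated too quickly. Writing $\QQ^{-1}(B)=P_{1}\bigl(R^{b}\cap(\T^{b}\times(Q\cap B))\bigr)$ is correct, but $Q$ is a priori only in $\mathcal{A}$ (differences of Borel and analytic sets appear in its definition) and the projection of an $\mathcal{A}$-measurable set need not lie in $\mathcal{A}$. The fix is exactly the ``saturation'' trick the paper uses: since each $C_{j}$ is saturated for $R^{b}$ (that is, $(x,y)\in R^{b}$ and $y\in C_{j}$ forces $x\in C_{j}$), one has
$$P_{1}\Bigl(R^{b}\cap\bigl(\T^{b}\times(Q_i\cap B)\bigr)\Bigr)=P_{1}\Bigl(R^{b}\cap\bigl(\T^{b}\times(\f^{-1}(\alpha_i)\cap\T^{b}\cap B)\bigr)\Bigr)\setminus\bigcup_{j<i}C_{j},$$
which is an analytic set minus a finite union of analytic sets, hence in $\mathcal{A}$; then take the countable union over $i$. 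You should spell this out rather than relegating it to ``descriptive set-theoretic bookkeeping''. Second, the closing remark about degenerate classes $\{x\}$ is wrong as stated: letting $\QQ$ fix such points would put into $Q$ points whose $\f$-value is not one of the chosen $\alpha_i$, which \emph{does} spoil the level-set structure required by the lemma. The correct stance (and the one the paper takes implicitly via $\T^{b}=\bigcup_{n}\T^{b}_{n}$) is that every $R^{b}$-class in $\T^{b}$ has positive length, so no such degenerate points occur; either appeal to that fact or drop the remark.
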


\begin{proof}

{\bf Step 1.} Construction of the quotient map.\\
For each $n \in \N$, consider the set $\T^{b}_{n}$ of those points $x$ having ray $R^{b}(x)$ longer than $1/n$, i.e.
$$
\T^{b}_{n} : = P_{1} \{ (x,y) \in \T^{b} \times \T^{b} \cap R^{b} \colon \sfd(x,y) \geq 1/n \}.
$$
It is easily seen that  $\T^{b}=\bigcup_{n \in \N} \T_n^{b}$ and that  $\T_{n}^{b}$ is analytic; moreover if $x \in \T_{n}^{b}, y \in \T^{b}$ and $(x,y) \in R^{b}$ then also $y \in \T_{n}^{b}$. 
In particular, $\T_{n}^{b}$ is the union of all those rays of $\T^{b}$ with length at least $1/n$. 

Now we consider the following saturated subsets of $\T_{n}^{b}$:  for $\alpha \in \R$
\begin{equation}\label{eq:defTnalpha}
 \T_{n,\alpha}^{b}:=  P_{1}  \Big( R^{b} \cap \Big \{ (x,y) \in \T_{n}^{b} \times \T_{n}^{b} \colon  \f(y) = \alpha - \frac{1}{3n}\Big \}  \Big)  \cap P_{1} \Big( R^{b} \cap \Big \{ (x,y) \in \T_{n}^{b} \times \T_{n}^{b} 
 \colon  \f(y) = \alpha+  \frac{1}{3n} \Big\}  \Big),
\end{equation}
and we claim  that for each $n\in \N$ there exists a sequence $\{\alpha_{i,n}\}_{i\in \N} \subset \R$ such that 
\begin{equation}  \label{eq:Tnalpha}
\T_{n}^{b} =  \bigcup_{i \in \N} \T_{n,\alpha_{i,n}}^{b}.
\end{equation}
First note that $(\supset)$ holds trivially. For the converse inclusion $(\subset)$   observe that
for each $\alpha \in \Q$, the set $ \T_{n,\alpha}^{b}$ coincides with the family of those $x \in \T_{n}^{b}$ 
%\footnote{AM: ho rimpiazzato ``the family of those rays $R(x)$'' con ``the family of those $x \in \T_{n}^{b}$'': visto che i raggi  possono essere visti come elementi di Q mi sembra pi\'u chiaro cos\'i} 
such that there exists $y^{+},y^{-} \in R^{b}(x)$ such that
\begin{equation}\label{eq:ypm}
\f(y^{+}) = \alpha - \frac{1}{3n}, \qquad \f(y^{-}) = \alpha + \frac{1}{3n}. 
\end{equation}
Since $x \in \T_{n}^{b}$, $R^{b}(x)$ is longer than $1/n$ and therefore there exist $z,y^{+},y^{-} \in R^{b}(x) \cap \T_{n}^{b}$ such that
$$
\f(z) -\f(\bar y^{+}) = \frac{1}{2n}, \qquad  \f(\bar y^{-}) -\f(z)= \frac{1}{2n}. 
$$
From Lemma \ref{L:convexity}, it is then not hard to find $y^{-}, y^{+} \in R^{b}(x)$ and $\alpha_{i,n} \in \Q$ satisfing \eqref{eq:ypm} and therefore $x\in  \T_{n,\alpha_{i,n}}^{b}$, 
proving \eqref{eq:Tnalpha}.
By the above construction, one can check that for each $l \in \Q$, the  level set $\f^{-1}(l)$ is a quotient set for  
$\T^{b}_{n,l}$, i.e.
$\T^{b}_{n,l}$ is formed by disjoint geodesics  each one  intersecting 
$\f^{-1}(l)$ in exactly one point. 

After taking differences between all the $\T_{n,l}$, 
we end up with 
$$
\T^{b} = \cup_{n} \mathcal{K}_{n}, 
$$
with $\{ \mathcal{K}_{n} \}_{n \in \N}$ disjoint family of 
$\mathcal{A}$-measurable sets, 
saturated with respect to $R^{b}$ and, for each $n$, 
$\mathcal{K}_{n} \subset \T^{b}_{n,l_{n}}$ for some $l_{n} \in \Q$.
We can therefore define a quotient map $\QQ:\T^{b} \to Q$ 
by defining its graph: 
$$
\gr(\QQ) = \bigcup_{n \in \N} \mathcal{K}_{n} \times \left( \f^{-1}(l_{n}) \cap 
\T^{b}_{n,l_{n}} \right) \cap R^{b}.
$$

\smallskip
{\bf Step 2.} Measurability. \\
To prove the claim it will by sufficient to show that for any $U \subset X$ 
open set, $\QQ^{-1}(U) \in \mathcal{A}$; for ease of notation pose 
$\gr(\QQ) = \Lambda$:
\begin{align*}
\QQ^{-1}(U) = &~ P_{1} (\Lambda \cap X \times U)  \\
= &~ P_{1} \Big(\cup_{n} \mathcal{K}_{n} \times (\f^{-1}(l_{n}) \cap \T^{b}_{n,l_{n}}) \cap X \times U \cap R^{b}\Big) \\ 
= &~ P_{1} \Big(\cup_{n} \mathcal{K}_{n} \times (\f^{-1}(l_{n}) \cap \T^{b}_{n,l_{n}} \cap  U)  \cap R^{b}\Big) \\
= &~ \cup_{n} P_{1} \Big( \mathcal{K}_{n} \times (\f^{-1}(l_{n}) \cap \T^{b}_{n,l_{n}} \cap  U)\cap R^{b} \Big) \\
= &~ \cup_{n} \mathcal{K}_{n}\cap \Big( R_{u}^{b}(\f^{-1}(l_{n}) \cap \T^{b}_{n,l_{n}} \cap  U)\Big),
\end{align*}
where the last identity follows from the saturation property of $\mathcal{K}_{n}$ with respect to $R_{u}^{b}$: 
$$
\{ x \in \mathcal{K}_{n} \colon \exists y \in \f^{-1}(l_{n}) \cap \T^{b}_{n,l_{n}} \cap U,  (x,y) \in R_{u}^{b}\} =  
\mathcal{K}_{n}\cap \Big( R_{u}^{b}(\f^{-1}(l_{n}) \cap \T^{b}_{n,l_{n}} \cap  U) \Big).
$$
Moreover, $\f^{-1}(l_{n})$ is closed, $\T_{n,l_{n}}$ analytic and $R_{u}^{b}$ Borel, hence
$$
R_{u}^{b}(\f^{-1}(l_{n}) \cap \T^{b}_{n,l_{n}} \cap  U) = 
P_{2} \Big( R_{u}^{b} \cap [\f^{-1}(l_{n}) \cap \T^{b}_{n,l_{n}} \cap  U] 
\times X \Big)
$$
is an analytic set, showing that $\QQ^{-1}(U)$ belongs to $\mathcal{A}$. 
The claim follows.
\end{proof}

We next apply the Disintegration Theorem in order to decompose the $\bar \mm\llcorner_{\T^{b}}$ according to the quotient map $\QQ$ constructed in Lemma  \ref{lem:Qlevelset}. We  will follow \cite[Appendix A]{biacar:cmono} where a  self-contained approach (and a proof) of the Disintegration Theorem in countably generated measure spaces can be found. 
An even more general version of the Disintegration Theorem can be found in \cite[Section 452]{Fre:measuretheory4}; for preliminaries see also  \cite[Section 6.3]{CMi}.
\\The Disintegration Theorem combined with Lemma \ref{lem:Qlevelset} gives (see also \cite[Theorem 6.18]{CMi} and subsequent discussions for more details) the existence of a measure-valued map 
$$Q \ni q \to \bar \mm_{q} \in \mathcal{P}(B_{\bar r + 2\delta}(\bar x))$$
 such that:
 \begin{itemize}
 \item  for each Borel set 
$C \subset B_{\bar r + 2\delta}(\bar x)$,
the map $q \mapsto \bar\mm_{q}(C)$ is $\qq$-measurable, where $\qq = \QQ_{\#} \bar \mm\llcorner_{\T^{b}}$ is a Borel measure;
\item  the following formula holds true:
\begin{equation}\label{E:disintegration}
\bar \mm\llcorner_{ \T^{b}} = \int_{Q} \bar \mm_{q} \, \qq(dq);
\end{equation}
\item for $\qq$-a.e. $q \in Q$, $\bar \mm_{q}(X_{q}) = 1$.
\end{itemize}
%

%%%%%%%%%%%%%%%%%%%%%%%%%%%%%%%%%%%%%%%%%%%%%%%%%%%%%%%%%%%%
%%%%%%%%%%%%%%%%%%%%%%%%%%%%%%%%%%%%%%%%%%%%%%%%%%%%%%%%%%%%
%%%%%%%%%%%%%%%%%%%%%%%%%%%%%%%%%%%%%%%%%%%%%%%%%%%%%%%%%%%%
%%%%%%%%%%%%%%%%%%%%%%%%%%%%%%%%%%%%%%%%%%%%%%%%%%%%%%%%%%%%

\subsection{Negligibility of branching points and $\CD(K-\ve,N)$-disintegration}

The next step in order to obtain the localization is to prove that the set of branching points is $\bar \mm$-negligible. 
This fact can be proved, by a well-known argument already presented in \cite{cava:MongeRCD} and used many times \cite{CMi,CM,CM2},
as a consequence of the existence and uniqueness of the optimal transport map for the $L^{2}$ optimal transportation problem between probability 
measures $\mu_{0},\mu_{1}$  with $\mu_{0}$ absolutely continuous with respect to the ambient measure. 

Existence and uniqueness 
have been proved in \cite{CM3} to hold in the framework of essentially non-branching, length spaces verifying $\CD_{loc}(K,N)$. 
The same argument can be used in our framework; notice indeed that from Lemma \ref{lem:2delta} 
$$
\T\subset B_{\bar r + 2\delta}(\bar x); 
$$
therefore, any $\nu \in \mathcal{P}(\Geo(X))$ such that $[0,1] \ni t \to ({\rm e}_{t})_{\sharp} \nu$ is a $W_{2}$-geodesic,  with $({\rm e}_{i})_{\sharp}\nu (\T) =1$ for $i=0,1$ 
have support contained in the larger ball $B_{4\bar r}(\bar x)$: 
\begin{equation}\label{eq:Ball4r}
({\rm e}_{t})_{\sharp}\nu (B_{4\bar r}(\bar x)) =1,
\end{equation}
for all $t \in [0,1]$; in particular, to obtain regularity properties of the transport set $\T$ (that in turn are deduced from regularity properties of $W_{2}$-geodesics) 
will be sufficient to assume curvature bounds inside $B_{4\bar r}(\bar x)$.
We have therefore the following 
\begin{proposition}
Let $(X,\sfd,\mm)$ be an essentially non-branching and geodesic metric measure space. Assume moreover it verifies 
$\CD_{loc}(K,N)$ in $B_{4\bar r}(\bar x)$ for some $K,N \in \R$ and $N > 1$, and that $B_{4\bar r}(\bar x)$ is relatively compact.
 
Then for any $0<\delta < \bar{r}/10$, any $\mu_{0},\mu_{1} \in \mathcal{P}(X)$ with $\mu_{0},\mu_{1}$ verifying \eqref{E:measures} and with $\mu_{0}$ absolutely continuous with respect to $\mm$, 
there exists a unique $\nu \in \Opt(\mu_{0},\mu_{1})$ and it is induced by a map; moreover there exists a Borel subset
$G \subset \Geo(X)$ such that $\nu(G) = 1$ and for each $t \in (0,1)$ the map ${\rm e}_{t}\llcorner_{G}$ is injective.
\end{proposition}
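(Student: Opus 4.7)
\medskip

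\noindent\textbf{Proof plan.}  The strategy is to adapt the argument of \cite{CM3}, which establishes existence and uniqueness of $L^{2}$-optimal maps in globally essentially non-branching $\CD_{loc}(K,N)$ spaces, and to verify that every geodesic arising in the argument stays within the ball $B_{4\bar r}(\bar x)$ where the local curvature hypothesis applies. Since $\mu_{0},\mu_{1}$ are supported in $\overline{B_{\delta}(\bar x)}$ and $\overline{B_{\bar r}(\bar x)}$ respectively, Lemma \ref{lem:2delta} guarantees that any $W_{2}$-geodesic $(\mu_{t})_{t\in[0,1]}$ between them is concentrated on geodesics entirely contained in $B_{\bar r + 2\delta}(\bar x) \subset B_{4\bar r}(\bar x)$, so indeed \eqref{eq:Ball4r} holds. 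Combined with relative compactness of $B_{4\bar r}(\bar x)$, one obtains existence of $\nu \in \Opt(\mu_{0},\mu_{1})$ by the standard direct method.

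\medskip
\noindent For uniqueness and the map structure, the plan is to proceed by contradiction. Suppose $\nu$ is not induced by a map; then one can construct two distinct $\nu^{1},\nu^{2} \in \Opt(\mu_{0},\mu_{1})$ whose common midpoint $\mu_{1/2}^{i}=({\rm e}_{1/2})_{\sharp}\nu^{i}$ differs on a set of positive mass, with both $\nu^{i}$ still supported on geodesics lying in $B_{4\bar r}(\bar x)$.  Applying the $\CD_{loc}(K-\varepsilon,N)$ condition inside $B_{4\bar r}(\bar x)$ along short subcurves of these midpoints, together with the essentially non-branching assumption localized to measures supported in $B_{\bar r}(\bar x)$, yields that for $t\in(0,1)$ the measure $({\rm e}_{t})_{\sharp}\nu$ is absolutely continuous with respect to $\mm$ and that branching of two distinct optimal dynamical plans produces a set of positive $\mm$-measure on which essential non-branching fails, a contradiction.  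The standard convexification of the $\CD_{loc}$ bound on a chain of short geodesic segments—all staying inside the relatively compact ball $B_{4\bar r}(\bar x)$—reduces the argument to the situation of \cite{CM3}.

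\medskip
\noindent Once uniqueness of $\nu$ and its map structure are established, the existence of the Borel set $G\subset \Geo(X)$ with $\nu(G)=1$ and injectivity of ${\rm e}_{t}\llcorner_{G}$ for every $t\in(0,1)$ follows along classical lines: if ${\rm e}_{t}$ were not injective on a set of positive $\nu$-measure, a further application of essential non-branching combined with a suitable rescaling of the restricted plan would contradict the just-proved uniqueness.  The Borel regularity of $G$ is obtained via standard selection theorems applied to the preimages of ${\rm e}_{t}$ (analytic sets whose complements have full $\nu$-measure).

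\medskip
\noindent The main obstacle is purely \emph{localization-bookkeeping}: one has to check that every auxiliary transportation problem produced in the argument (restrictions, rescalings, interpolations between midpoints) still has marginals whose supports force every associated geodesic to lie in $B_{4\bar r}(\bar x)$.  This is precisely what the smallness condition $\delta \leq \bar r/10$ and the choice of the enlarged ball $B_{4\bar r}(\bar x)$ are designed to guarantee via Lemma \ref{lem:2delta}; once verified, the remaining steps are routine adaptations of \cite{CM3}.
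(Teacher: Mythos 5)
Your proposal is correct and follows essentially the same route as the paper: the paper also establishes the result by invoking the argument of \cite{CM3} for essentially non-branching $\CD_{loc}(K,N)$ spaces, noting via Lemma \ref{lem:2delta} that the transport set $\T$ is contained in $B_{\bar r + 2\delta}(\bar x)$ and hence every $W_{2}$-geodesic with marginals on $\T$ is concentrated on geodesics lying inside $B_{4\bar r}(\bar x)$ (this is precisely the content of \eqref{eq:Ball4r}), so the local curvature hypothesis is all that is needed. Your final paragraph — verifying that all auxiliary transport problems arising in the argument (restrictions, rescalings, interpolations) still force the relevant geodesics to stay in $B_{4\bar r}(\bar x)$ — captures exactly what the enlarged radius $4\bar r$ and the bound $\delta \leq \bar r/10$ are designed to ensure, and is the only genuinely new observation beyond \cite{CM3}.
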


\noindent
Then we deduce that  $\bar \mm(\T \setminus \T^{b}) = 0$ and therefore
$$
\bar \mm\llcorner_{ \T} = \int_{Q} \bar \mm_{q} \, \qq(dq);
$$
with for $\qq$-a.e. $q \in Q$, $\bar \mm_{q}(X \setminus X_{q}) = 0$.

The next step consists in proving regularity properties for the conditional measures $\bar \mm_{q}$;
we have already observed that from Lemma \ref{L:convexity} each $X_{q}$ is a geodesic then, for each $q$, one can naturally 
consider the one-dimensional metric measure space $(X_{q},\sfd|_{X_{q} \times X_{q}},\bar \mm_{q})$ and prove that $\CD_{\loc}(K,N)$ localizes, i.e.  
for $\qq$-a.e. $q\in Q$ the metric measure space $(X_{q}, ,\sfd|_{X_{q} \times X_{q}},\bar \mm_{q})$ verifies $\CD(K,N)$. 
As $\T^{b} \subset B_{\bar r +2\delta}(\bar x)$ and $(X,\sfd,\mm)$ verifies $\CD_{loc}(K,N)$ inside $B_{4\bar r}(\bar x)$, 
we can repeat the same argument of \cite{CM} where the claim was proved for a generic transport set and the whole space was assumed to satisfy $\CD_{loc}(K,N)$.
For all the details we refer to \cite{CM}; here we mention that combining \cite[Theorem 3.8, Theorem 4.2, Theorem 5.1]{CM} we get the following localization result. 
Before stating it, recall that $q \mapsto \bar{\mm}_{q}$ is a $\CD(K-\ve,N)$ disintegration of $\bar{\mm}$ if and only if (by definition)  for $\qq$-a.e. $q \in Q$ the one dimensional metric measure space 
$(X_{q}, \sfd|_{X_{q} \times X_{q}},\bar \mm_{q})$ verifies $\CD(K-\ve,N)$.

\begin{theorem}\label{T:localization}
Let $(X,\sfd,\mm)$ be an essentially non-branching geodesic space.  Assume there exists  $\bar x\in X$ such that  $B_{4\bar r}(\bar x)$  is relatively compact 
and  $(X,\sfd,\mm)$ verifies $\CD_{loc}(K-\ve,N)$  inside $B_{4\bar r}(\bar x)$ for some $K,N \in \R, \ve\in [0,1]$, and $N > 1$. 

Then for $0 <\delta < \bar r/10$ and any Borel set $E \subset B_{\delta}(\bar x)$ with $\mm(E) > 0$,  the ball $B_{\bar{r}+2\delta}(\bar{x})$ can be written as the disjoint union of two sets $Z$ and $\mathcal{T}$ with $\mathcal{T}$ admitting a partition 
$\{ X_{q} \}_{q \in Q}$ and a corresponding disintegration of $\bar{\mm}\llcorner_{\mathcal{T}}$, $\{\bar{\mm}_{q} \}_{q \in Q}$ such that: 

\begin{itemize}
\item For any $\mm$-measurable set $B \subset X$ it holds 
$$
\bar{\mm}\llcorner_{\T}(B) = \int_{Q} \bar{\mm}_{q}(B \cap \T) \, \qq(dq), 
$$
where $\qq$ is a Borel measure over the $\mm$-measurable quotient set $Q \subset X$.
\medskip
\item For $\qq$-almost every $q \in Q$, the set $X_{q}$ is isometric to an interval and $\bar{\mm}_{q}$ is supported on it. 
Moreover $q \mapsto \bar{\mm}_{q}$ is a $\CD(K-\ve,N)$ disintegration.
\medskip
\item For $\qq$-almost every $q \in Q$, it holds $\int_{X_{q}} f_{E} \, \bar{\mm}_{q} = 0$ and $f_{E} = 0$ $\bar{\mm}$-a.e. in $Z$, where $f_{E}$ was defined in \eqref{eq:deffA}.
\end{itemize}
\end{theorem}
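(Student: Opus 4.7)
The plan is to apply the $L^{1}$-localization machinery developed in \cite{CM} to the signed function $f_{E}$ defined in \eqref{eq:deffA}, with the understanding that all objects must be confined inside $B_{\bar r+2\delta}(\bar x)$ since only \emph{local} curvature is available. Starting from the marginals $\mu_{0},\mu_{1}$ in \eqref{eq:mu0mu1}, Kantorovich duality produces a $1$-Lipschitz potential $\f$ and the set $\Gamma$ as in \eqref{E:Gamma}; by restricting to $\Gamma_{\delta,r}$ and saturating as in \eqref{E:barGammadeltar}, Lemma \ref{lem:2delta} guarantees that all associated transport rays lie in $B_{\bar r+2\delta}(\bar x)$. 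One then extracts the transport set $\T$, removes the branching points using Lemmas \ref{L:branch}--\ref{L:convexity} to obtain $\T^{b}$, verifies that $R^{b}$ is an equivalence relation whose classes $X_{q}$ are isometric to intervals, and constructs a measurable quotient map $\QQ:\T^{b}\to Q$ via Lemma \ref{lem:Qlevelset}.

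Once the partition $\{X_{q}\}_{q\in Q}$ is in place, I would apply the Disintegration Theorem in the form recalled before \eqref{E:disintegration} to obtain conditional probability measures $\bar{\mm}_{q}$ with $\bar{\mm}\llcorner_{\T^{b}}=\int_{Q} \bar{\mm}_{q}\,\qq(dq)$ and $\bar{\mm}_{q}(X_{q})=1$. To pass from $\T^{b}$ to $\T$ we need $\bar{\mm}(\T\setminus \T^{b})=0$: this is the standard argument of \cite{cava:MongeRCD} based on existence and uniqueness of the $L^{2}$-optimal transport map between absolutely continuous marginals, proved in \cite{CM3} for essentially non-branching $\CD_{loc}(K,N)$ spaces. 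The point is that, by \eqref{eq:Ball4r}, all intermediate $W_{2}$-interpolations stay inside $B_{4\bar r}(\bar x)$, so the local uniqueness/regularity statements of \cite{CM3} apply verbatim here even though $\CD_{loc}(K-\ve,N)$ is only assumed in that ball.

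The main technical point, and the one I expect to be the principal obstacle, is proving that $q\mapsto \bar{\mm}_{q}$ is a $\CD(K-\ve,N)$ disintegration, i.e.\ that each one-dimensional factor $(X_{q},|\cdot|,\bar{\mm}_{q})$ satisfies $\CD(K-\ve,N)$. I would mimic the strategy of \cite[Theorems 3.8, 4.2, 5.1]{CM}: one transports along pairs of conditional measures supported on suitably chosen strips around rays, extracts curvature-dimension information from the mass-distortion inequalities along these transports, and passes to the limit to get the one-dimensional $\CD(K-\ve,N)$ bound on the densities of $\bar{\mm}_{q}$. The delicate point compared to \cite{CM} is that every auxiliary $W_{2}$-geodesic used in the argument must have support in $B_{4\bar r}(\bar x)$ so that $\CD_{loc}(K-\ve,N)$ is actually applicable; this is precisely guaranteed because the rays $X_{q}$ lie in $B_{\bar r+2\delta}(\bar x)$ and any intermediate interpolation inherits \eqref{eq:Ball4r}, provided $\bar\delta_{K,N}$ is chosen small enough.

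Finally, setting $Z:=B_{\bar r+2\delta}(\bar x)\setminus \T$, the identity $f_{E}=0$ $\bar{\mm}$-a.e.\ on $Z$ is immediate from the construction of $\T$: any $x\in B_{\bar r+2\delta}(\bar x)$ with $f_{E}(x)\neq 0$ lies in $\supp(\mu_{0})\cup\supp(\mu_{1})$ and is therefore the starting or ending point of some $\Gamma$-ray, hence belongs to $\T$. The zero-mean property $\int_{X_{q}} f_{E}\,\bar{\mm}_{q}=0$ for $\qq$-a.e.\ $q$ follows by disintegrating both $\mu_{0}$ and $\mu_{1}$ along $\QQ$ and observing that the optimal plan $\bar\pi$ is concentrated on $R^{b}$, so it transports the $q$-conditional of $\mu_{0}$ into the $q$-conditional of $\mu_{1}$ ray by ray; dividing by $\bar{\mm}_{q}$ and using $\int f_{E}\,\bar{\mm}=0$ globally yields the claimed ray-wise balance.
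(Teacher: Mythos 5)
Your proposal follows essentially the same route as the paper: same reduction to the $L^{1}$-transport problem between the marginals in \eqref{eq:mu0mu1}, same use of the truncated and saturated set $\bar\Gamma_{\delta,r}$ with Lemma \ref{lem:2delta} to confine rays in $B_{\bar r+2\delta}(\bar x)$, same branching-removal and section construction via Lemmas \ref{L:branch}--\ref{lem:Qlevelset}, same appeal to \cite{cava:MongeRCD} and \cite{CM3} for $\bar{\mm}(\T\setminus\T^b)=0$, same observation via \eqref{eq:Ball4r} that all intermediate $W_2$-interpolations stay inside $B_{4\bar r}(\bar x)$, and same invocation of \cite[Theorems 3.8, 4.2, 5.1]{CM} for the $\CD(K-\ve,N)$ one-dimensional property. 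The sketch is correct and matches the paper's argument.
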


Let us mention that Theorem \ref{T:localization} holds more generally  replacing  $f_{E}$ with any $f \in L^{1}(X,\mm)$  with  $\int_{X} f \mm=0$ and $\int_{X} f(x) \, \sfd(x,\bar{x}) \, \mm(dx)<\infty$ (see \cite{CM}), but for the aims of  the present paper the above statement will suffice.

Since $(X_{q},\sfd|_{X_{q}\times X_{q}},\bar \mm_{q})$ is isomorphic (via $\f$) as metric measure space to $(I_{q}, |\cdot|, \tilde \mm_{q})$ where 
$(a_{q},b_{q})=I_{q}\subset \R$ is an interval, $|\cdot|$ is the Euclidean distance and $\tilde \mm_{q}=\varphi_{\sharp} \bar{\mm}_{q} \in \mathcal{P}(I_{q})$, 
the previous curvature property is equivalent to state that $\tilde{\mm}_{q}$  enjoys the  representation $\tilde \mm_{q} = h_{q} \L^{1}\llcorner_{I_{q}}$,
where $\L^{1}$ denotes the one dimensional Lebesgue measure, and for all $x_0,x_1 \in I_{q}$ and $t \in [0,1]$ it holds:
\begin{equation}\label{E:onedimdensity}
 h_{q}( (1-t) x_0 + t x_1)^{\frac{1}{N-1}} \geq  \sigma^{(1-t)}_{K-\ve,N-1}(|x_1-x_0|) h_{q}(x_0)^{\frac{1}{N-1}} + \sigma^{(t)}_{K-\ve,N-1}(|x_1-x_0|) h_{q}(x_1)^{\frac{1}{N-1}}  ,
\end{equation}
(recall the coefficients $\sigma_{K,N}^{t}(\theta)$ from \eqref{E:sigma}). 

\smallskip

It is worth recalling that a one-dimensional metric measure space, that for simplicity we directly identify with $((a,b), |\cdot|, h \L^{1})$ satisfies $\CD(K-\ve,N)$ 
if and only if the density $h$ verifies
\eqref{E:onedimdensity}, which in turn corresponds to the weak formulation of the more classic differential inequality 
$$
(h^{\frac{1}{N-1}}) '' + \frac{K-\ve}{N-1} h^{\frac{1}{N-1}} \leq 0.
$$

\smallskip
Coming back to the densities $h_{q}$ of the above disintegration, as they integrate to 1, from \cite[Lemma A.8]{CMi} we deduce the following 
upper bound for the density $h_{q}$:
\begin{equation}\label{E:bound}
\sup_{x \in (a_{q},b_{q})} h_{q}(x) \leq \frac{1}{b_{q}-a_{q}} \begin{cases} N & K -\ve \geq 0  \\ (\int_0^1 (\sigma^{(t)}_{K-\ve,N-1}(b_{q}-a_{q}))^{N-1} dt)^{-1} & K -\ve < 0 \end{cases};
\end{equation}
in particular, for fixed $K$ and $N$, $h_{q}$ is uniformly bounded from above as long as $b_{q}-a_{q}$ is uniformly bounded away from $0$ (and from above if $K-\ve < 0$).
Clearly since $X_{q}$ is a geodesic contained in $B_{\bar r + 2\delta}(\bar x)$, its length is trivially bounded from above.

\subsection{Sharp Bishop-Gromov inequality in essentially non-branching $\CD_{loc}(K,N)$-spaces}\label{Ss:doubling}
The sharp Bishop-Gromov inequality in non-branching $\CD_{loc}(K,N)$-spaces was proved in \cite{cavasturm:MCP}. 
In this short section, for reader's convenience,  we give a short self-contained proof of the sharp Bishop-Gromov inequality tailored to our different framework.

As  always in this section we assume $(X,\sfd,\mm)$ to be essentially non-branching and to  satisfy the $\CD_{loc}(K-\ve,N)$ condition inside  $B_{4 \bar r}(\bar{x})$.
Theorem \ref{T:localization} permits to localize any function $f_{E}$ defined in \eqref{eq:deffA}, with $E \subset B_{\delta}(\bar x)$.
As one can notice from the construction we presented in this section, the only place where the definition of $f_{E}$ plays a role is in the 
definition of the optimal transport problem from $\mu_{0} = f^{+}_{E} \bar \mm$ to $\mu_{1} = f^{-}_{E} \bar \mm$; from there, one considers 
the associated 1-Lipschitz Kantorovich potential $\f : X \to \R$ and builds the localization for $f_{E}$. 

The same procedure can be repeated verbatim replacing $\varphi$ with any $1$-Lipschitz function inside $B_{4\bar r}(\bar{x})$; in particular we are interested in considering $\f_{\bar x} : = \sfd(\cdot,\bar x)$.
As the space is geodesic, $\bar \mm( B_{4 \bar r}(\bar x) \setminus \T ) = 0$ and the transport set $\T$ is formed by geodesics starting from $\bar x$. In particular this permits 
to prove that for any $0 \leq r < 4 \bar r$ 
\begin{equation}\label{eq:disintbarmmBG}
\bar \mm (B_{r}(\bar x) ) = \int_{Q}   \tilde{\mm}_{q}([0,r]) \, \qq(dq),
\end{equation}
where $q \to \bar \mm_{q}$ is a $\CD(K-\ve,N)$ disintegration and  $\tilde \mm_{q}=\varphi_{\sharp} \bar{\mm}_{q} \in \mathcal{P}(I_{q})$ was defined in the previous subsection.
From the classical Bishop-Gromov volume growth inequality, we know that for any $0 < r < R < 4 \bar r$

\begin{equation}\label{eq:BG1D}
\tilde \mm_{q}([0,r]) \geq \tilde \mm_{q}([0,R]) \frac{V_{K-\ve,N}(r) }{V_{K-\ve,N}(R)}.
\end{equation}
The combination of \eqref{eq:disintbarmmBG} and \eqref{eq:BG1D}  implies the sharp Bishop-Gromov volume growth inequality: for any $0 < r < R < 4 \bar r$
\begin{equation}\label{E:bishop}
\frac{\bar \mm(B_{r}(\bar x))}{\bar \mm(B_{R}(\bar x))}\geq \frac{V_{K-\ve,N}(r)}{V_{K-\ve,N}(R)}.
\end{equation}
\noindent
For later, it will also be convenient to estimate the volume of annuli. As above, for any $0 \leq r_{1} < r_{2} < 4 \bar r$ 
%\footnote{AM: nella proof \'e utile avere la Bishop-Gromov sulle corone circolari, ho allora adattato il precedente argomento}
%
\begin{equation}\label{eq:disintbarmmBGAnnuli}
\bar \mm (B_{r_{2}}(\bar x)\setminus B_{r_{1}}(\bar x) ) = \int_{Q}\tilde  \mm_{q}([r_{1},r_{2}]) \, \qq(dq),
\end{equation}
where $q \to \bar \mm_{q}$ is a $\CD(K-\ve,N)$ disintegration. From the classical Bishop-Gromov volume growth inequality, we know that for any $0 < r_{1} < r_{2}  < 4 \bar r$
\begin{equation}\label{eq:BG1DAnnuli}
\tilde \mm_{q}([r_{1},r_{2}]) \leq  \frac{  \tilde \mm_{q}([0,r_{1}]) }{V_{K-\ve,N}(r_{1})}  \; \Big(V_{K-\ve,N}(r_{2})- V_{K-\ve,N}(r_{1}) \Big) .
\end{equation}
The combination of \eqref{eq:disintbarmmBGAnnuli} and \eqref{eq:BG1DAnnuli}  implies the following estimate on the measure of annuli
\begin{equation}\label{E:mmAnnuli}
\mm(B_{r_{2}}(\bar{x})\setminus B_{r_{1}}(\bar{x}))\leq  \frac{\mm(B_{r_{1}}(\bar{x})) }{V_{K-\ve,N}(r_{1})}  \; \Big(V_{K-\ve,N}(r_{2})- V_{K-\ve,N}(r_{1}) \Big), \quad \forall 0<r_{1}<r_{2}< 4 \bar{r}.
\end{equation}

%%%%%%%%%%%%%%%%%%%%%%%%%%%%%%%%%%%%%%%%%%%%%%%%%%%%%%%%%%%%%%%%%%%%%%%%
%%%%%%%%%%%%%%%%%%%%%%%%%%%%%%%%%%%%%%%%%%%%%%%%%%%%%%%%%%%%%%%%%%%%%%%%
%%%%%%%%%%%%%%%%%%%%%%%%%%%%%%%%%%%%%%%%%%%%%%%%%%%%%%%%%%%%%%%%%%%%%%%%
%%%%%%%%%%%%%%%%%%%%%%%%%%%%%%%%%%%%%%%%%%%%%%%%%%%%%%%%%%%%%%%%%%%%%%%%
%%%%%%%%%%%%%%%%%%%%%%%%%%%%%%%%%%%%%%%%%%%%%%%%%%%%%%%%%%%%%%%%%%%%%%%%
%%%%%%%%%%%%%%%%%%%%%%%%%%%%%%%%%%%%%%%%%%%%%%%%%%%%%%%%%%%%%%%%%%%%%%%%

\section{Proof of the main results}\label{S:ProofMain}
First of all note that  we can assume  $K$ and $N$ to be such that $\sup_{r>0}{\rm Vol}_{K,N}(r)> 1$, so that  $\bar{r}_{K,N} < \pi \sqrt{ (N-1)/K}$. Indeed, if  $\bar{r}_{K,N}= \pi \sqrt{ (N-1)/K}$ then $B_{\bar{r}_{K,N}}(\bar{x})=X$ and Theorem \ref{thm:MainNonSmooth} follows directly by the  global Levy-Gromov Inequality that was established for essentially non-branching $\CD_{loc}(K,N)$ spaces in \cite{CM}.
 
Thus from now on we will work under the assumption that  ${\rm Vol}_{K,N}(\bar{r})=1$,   $\mm(B_{\bar{r}}(\bar{x})) \geq 1-\eta$ and by the very definition \eqref{eq:defbarmm}  of $\bar{\mm}$ we have $\bar{\mm}(B_{\bar{r}+2\delta}(\bar{x}))=1$.
\\In the last section, fixed any Borel subset $E \subset B_{\delta}(\bar x)$, we considered the function  $f_{E}$ defined in  \eqref{eq:deffA} and in Theorem  \ref{T:localization} we constructed a 1-dimensional localization. 
\\The last assertion of Theorem \ref{T:localization} implies that
\begin{align*}
0& = \int_{X_{q}} f_{E} \, \bar{\mm}_{q}=  \bar{\mm}_{q}(E\cap X_{q}) - \frac{\bar{\mm}(E)}{\bar{\mm}(B_{\bar{r}}(\bar{x}))}  \bar{\mm}_{q} 
(B_{\bar{r}}(\bar{x})), \quad \text{for $\qq$-a.e. $q \in Q$};
  \end{align*}
thus
  \begin{equation}\label{eq:mmqAmmA}
  \bar{\mm}_{q}(E\cap X_{q})=  \frac{ \bar{\mm}_{q} (B_{\bar{r}}(\bar{x}))}{\bar{\mm}(B_{\bar{r}}(\bar{x}))} \, \bar{\mm}(E).
  \end{equation}
Moreover, from $B_{4\bar r}(\bar x)\subset \supp(\mm)$, we have % from the doubling property of $\bar \mm$ obtained in Section \ref{Ss:doubling}, 
$\bar \mm(E)  < \bar \mm(B_{\bar r}(\bar x))$ and therefore
$f_{E} >0$ over $E$ and $f < 0$ over $B_{\bar r}(\bar x) \setminus E$;
then since  $f_{E} = 0$ $\bar{\mm}$-a.e. in $Z$, necessarily  $\bar{\mm}( B_{\bar{r}}(\bar{x}) \cap Z)=0$, i.e. 
\begin{equation}\label{eq:BT}
\bar{\mm}( B_{\bar{r}}(\bar{x}) \setminus  {\mathcal T})=0.
\end{equation}
In particular, for any $0<\rho\leq \delta \leq \bar{r}_{K,N}/10$ it holds
$$
\bar{\mm}(E^{\rho})=\bar{\mm}(E^{\rho}\cap  {\mathcal T})=\int_{Q} \bar{\mm}_{q} (E^{\rho}) \, \qq(dq). 
$$
Therefore
\begin{align}
\bar{\mm}^{+}(E)&= \liminf_{\rho \downarrow 0} \frac{\bar{\mm}(E^{\rho})- \bar{\mm}(E)}{\rho}=   \liminf_{\rho \downarrow 0} \frac{\bar{\mm}(E^{\rho} \cap {\mathcal T})- \bar{\mm}(E \cap {\mathcal T} )}{\rho} \nonumber \\
&=  \liminf_{\rho \downarrow 0}  \int_{Q}\frac{\bar{\mm}_{q}(E^{\rho} \cap X_{q})- \bar{\mm}_{q}(E \cap X_{q} )}{\rho} \, \qq(dq).
\end{align}
Calling now $E_{q}:= E \cap X_{q}$ and noting that $E_{q}^{\rho}\cap X_{q} \subset E^{\rho} \cap X_{q}$, 
we can continue the chain of inequalities by using Fatou's Lemma  as
\begin{align}
\bar{\mm}^{+}(E)&\geq   \liminf_{\rho \downarrow 0}  \int_{Q}\frac{\bar{\mm}_{q}(E^{\rho}_{q})- \bar{\mm}_{q}(E_{q})}{\rho} \, \qq(dq) 
\geq  \int_{Q}   \liminf_{\rho \downarrow 0} \frac{\bar{\mm}_{q}(E^{\rho}_{q})- \bar{\mm}_{q}(E_{q})}{\rho} \,  \qq(dq) \nonumber \\
&=  \int_{Q}  \bar{\mm}^{+}_{q}(E_{q}) \, \qq(dq) \geq   \int_{Q} \cI_{K-\varepsilon, N, \bar{r}+\delta} (\bar{\mm}_{q}(E_{q})) \,  \qq(dq)  \nonumber \\
&=   \int_{Q} \cI_{K-\varepsilon, N, \bar{r}+ \delta} \left( \frac{ \bar{\mm}_{q} (B_{\bar{r}}(\bar{x}))}{\bar{\mm}(B_{\bar{r}}(\bar{x}))} \, \bar{\mm}(E) \right) \,  \qq(dq), 
  \label{eq:MCmmq}
\end{align}
where we used \cite[Theorem 6.3]{CM} and that $\textrm{Length}(X_{q}) \leq  \bar{r}+\delta$ by Lemma \ref{lem:2delta} to infer  
$\bar{\mm}^{+}_{q}(E_{q})\geq  \cI_{K-\varepsilon, N, \bar{r}+\delta} (\bar{\mm}_{q}(E_{q}))$ for $\qq$-a.e. $q \in Q$
and where we recalled \eqref{eq:mmqAmmA} to get the last identity.
\\

We next claim that
\begin{equation}\label{eq:claimmmq}
\left| \frac{ \bar{\mm}_{q} (B_{\bar{r}}(\bar{x}))}{\bar{\mm}(B_{\bar{r}}(\bar{x}))} \, \bar{\mm}(E) -  \bar{\mm}(E)  \right| \leq C_{K,N}  (\delta+\varepsilon) \;  \bar{\mm}(E)   , 
\quad \forall \delta \in (0, \bar{\delta}_{K,N}], \; \forall \varepsilon\in (0,\bar{\varepsilon}_{K,N}],
\end{equation}
where $C_{K,N}> 0, \bar{\delta}_{K,N} \in (0, \bar{r}/10], \bar{\varepsilon}_{K,N}\in (0,1]$ depend just on $K$ and $N$.
\\To show the claim observe first that by Bishop-Gromov volume comparison \eqref{E:bishop}, combined with $\textrm{Vol}_{K-\varepsilon,N}(\bar{r}_{K-\varepsilon,N})=1=\bar{\mm}(B_{\bar{r} + 2\delta}(\bar{x}))$ and with $\bar{r}_{K-\varepsilon,N} \leq \bar{r}:=\bar{r}_{K,N}\leq  \bar{r}_{K-\varepsilon,N}+C_{K,N} \varepsilon$, it follows
\begin{align} 
\bar{\mm}(B_{\bar{r}}(\bar{x})) &\geq  \frac{\textrm{Vol}_{K-\varepsilon,N}(\bar{r})} {\textrm{Vol}_{K-\varepsilon,N}(\bar{r}+2 \delta)} \;  \bar{\mm}(B_{\bar{r} + 2\delta}(\bar{x})) \geq  \frac{1} {\textrm{Vol}_{K-\varepsilon,N}( \bar{r}_{K-\varepsilon,N}+C_{K,N} \varepsilon+2 \delta)} \nonumber \\
& \geq 1- C_{K,N} (\delta+\varepsilon),\quad \forall  \delta \in (0, \bar{\delta}_{K,N}], \; \forall \varepsilon\in (0,\bar{\varepsilon}_{K,N}], \label{eq:BGmmq}
  \end{align}
for some $C_{K,N}> 0, \bar{\delta}_{K,N} \in (0, \bar{r}/10]$, $ \bar{\varepsilon}_{K,N}\in (0,1]$ depending just on $K$ and $N$. 
\\On the other hand, recalling that $\bar{\mm}_{q}(B_{\bar{r}+2 \delta}(\bar{x}))=1$, we have
\begin{align}
\bar{\mm}_{q}(B_{\bar{r}}(\bar{x}))&= \bar{\mm}_{q}(B_{\bar{r}+2 \delta}(\bar{x}))- \bar{\mm}_{q}(B_{\bar{r}+2 \delta}(\bar{x}) \setminus B_{\bar{r}}(\bar{x})) \nonumber \\
&= 1-  \bar{\mm}_{q}(B_{\bar{r}+2 \delta}(\bar{x}) \setminus B_{\bar{r}}(\bar{x})). \label{eq:mqAnnulus}
\end{align}
Notice that for all $x \in B_{\delta}(\bar{x})$  and $y \in B_{\bar{r}+2 \delta}(\bar{x}) \setminus B_{\bar{r}}(\bar{x})$, by triangle inequality we have
$$\sfd(x, y)\geq \sfd(\bar{x}, y)- \sfd(\bar{x},x) \geq \bar{r}-\delta, $$
thus, recalling that $\textrm{Length}(X_{q}) \leq \bar{r}+\delta$, we get
\begin{equation}\label{eq:XqBdelta}
X_{q}^{-1} \big(B_{\bar{r}+2 \delta}(\bar{x}) \setminus B_{\bar{r}}(\bar{x}) \big) \subset \left[ \frac{\bar{r}-\delta} {\bar{r}+\delta}, 1  \right],
\end{equation}
where, with a slight abuse of notation, in the last formula we denoted with $X_{q}:[0,1]\to X$ the constant-speed parametrization of the ray $X_{q}$.
\\Moreover if $X_{q}^{-1} \big(B_{\bar{r}+2 \delta}(\bar{x}) \setminus B_{\bar{r}}(\bar{x}) \big)\neq \emptyset$, necessarily (as each $X_{q}$ intersects $B_{\delta}(\bar x)$)
the length of $X_{q}$ is at least $\bar r/2$;
then since $\bar{\mm}_{q}$ is a $\CD(K-\varepsilon, N)$ disintegration, we can invoke \eqref{E:bound} and obtain from \eqref{eq:XqBdelta} that
\begin{equation}\label{eq:mmqAnnulusleq}
\bar{\mm}_{q}  \big( B_{\bar{r}+2 \delta}(\bar{x}) \setminus B_{\bar{r}}(\bar{x}) \big) \leq C_{K,N} \delta, \quad \forall \delta \in (0,\bar{\delta}_{K,N}].
\end{equation}
The combination of \eqref{eq:BGmmq},   \eqref{eq:mqAnnulus}  and \eqref{eq:mmqAnnulusleq}   gives the desired claim \eqref{eq:claimmmq}.
\\

We can now conclude the proof. %We just give the argument for $K<0$: the case $K=0$ is analogous and the case $K>0$ is easier.
Since $E \subset B_{\delta}(\bar{x})$ for $\delta\in (0, \bar{\delta}_{K,N}]$ small and by assumption (2) $\mm(B_{\delta}(\bar{x}))$ is small,
we are just interested on the model isoperimetric profile function $\cI_{K,N,D}(v)$ for small $v>0$. 
Recall from \eqref{eq:IKNvN-1Nprel}-\eqref{eq:limsupoKNprel} that 
%\footnote{AM: ho aggiunto qualche dettaglio in pi\'u nella sezione ``Preliminaries''} 
\begin{equation}\label{eq:IKNvN-1N}
\cI_{K,N,\bar{r}} (v)  = N \omega_{N}^{1/N}  \; v^{\frac{N-1}{N}}+O_{K,N}(v^{\frac{3(N-1)}{N}}), \quad \forall v \in [0, 1], 
\end{equation}
where $(0,1]\ni t\mapsto O_{K,N}(t) \in \R$ is a smooth function depending smoothly on $K,N$ and satisfying 
\begin{equation}\label{eq:limsupoKN}
\limsup_{t\downarrow 0} \frac{|O_{K,N}(t)|}{t}< \infty.
\end{equation}
By slightly perturbing the lower bound on the Ricci curvature and the upper bound on the diameter, by the smoothness of the map $\R\times \R_{>1} \times \R_{>0}\ni(K,N,D) \mapsto \cI_{K,N,D}(v)$ for fixed $v>0$ sufficiently small (to this aim see the discussion after  \eqref{eq:IKNvN-1Nprel}-\eqref{eq:limsupoKNprel}) it  is also readily  seen  that for $N\geq 2$ 
%\footnote{AM: ho aggiunto una piccola spiegazione tra parentesi e nei preliminari} 
%\footnote{If $N\in (1,2)$ the estimate \eqref{eq:IKNvN-1Neps} must be replaced by
%$$
%\cI_{K-\varepsilon,N,\bar{r}+\delta} (v)  \geq N \omega_{N}^{1/N}  \; \big[1-C_{K,N} \big(\varepsilon+\delta^{\frac{2(N-1)}{N}}\big) \big]  \; v^{\frac{N-1}{N}}, \quad \forall v \in [0, 2 C_{K,N} \delta].
%$$
% }
\begin{align}
\cI_{K-\varepsilon,N,\bar{r}+\delta} (v) & \geq \cI_{K-\varepsilon,N,\bar{r}_{K-\varepsilon,N}+(\bar{r}-\bar{r}_{K-\varepsilon,N})+\delta} (v)   \geq \big(1-C_{K,N} (\delta+\varepsilon)\big)\,  \cI_{K-\varepsilon,N,\bar{r}_{K-\varepsilon,N}} (v)  \nonumber \\
&   \overset{\eqref{eq:IKNvN-1N}}{=}   \big(1-C_{K,N} (\delta+\varepsilon)\big)\, \Big[ N \omega_{N}^{1/N}  \; v^{\frac{N-1}{N}}+O_{K-\varepsilon,N}(v^{\frac{3(N-1)}{N}}) \Big]  \nonumber \\
& \; =   N \omega_{N}^{1/N}  \big(1-C_{K,N} (\delta+\varepsilon)+ O_{K-\varepsilon,N}(v^{\frac{2(N-1)}{N}})\big)  \; v^{\frac{N-1}{N}}    \nonumber \\
& \; \geq  N \omega_{N}^{1/N}  \; \big(1-C_{K,N}  (\delta+\varepsilon) \big)  \; v^{\frac{N-1}{N}}, \quad \forall v \in [0, C_{K,N} \delta].  \label{eq:IKNvN-1Neps}
\end{align}
Since by Bishop-Gromov inequality and assumption (2) we have
\begin{align*}
\bar{\mm}(E)&\leq \bar{\mm}(B_{\delta}(\bar{x})) \leq \left( \limsup_{r\downarrow 0} \frac{\bar{\mm} (B_{r}(\bar{x}))} {\textrm{Vol}_{K-\varepsilon,N}(r)} \right)  \textrm{Vol}_{K-\varepsilon,N}(\delta) \leq (1+\eta) \;  \textrm{Vol}_{K-\varepsilon,N}(\delta), %\label{eq:mmAVol}
\end{align*}
we can apply  \eqref{eq:claimmmq} and  \eqref{eq:IKNvN-1Neps}  to get %and \eqref{eq:mmAVol} gives 
\begin{align}
\cI_{K-\varepsilon,N,\bar{r}+\delta} \left(  \frac{ \bar{\mm}_{q} (B_{\bar{r}}(\bar{x}))}{\bar{\mm}(B_{\bar{r}}(\bar{x}))} \, \bar{\mm}(E) \right) & \overset{\eqref{eq:IKNvN-1Neps}} {\geq} N \omega_{N}^{1/N}  \; 
\big(1-C_{K,N} (\delta+\varepsilon) \big)   \left(  \frac{ \bar{\mm}_{q} (B_{\bar{r}}(\bar{x}))}{\bar{\mm}(B_{\bar{r}}(\bar{x}))} \, \bar{\mm}(E) \right)^{\frac{N-1}{N}} \nonumber\\
& \overset{\eqref{eq:claimmmq}}  {\geq}   N \omega_{N}^{1/N}  \; \big(1-C_{K,N} (\delta+\varepsilon) \big)  \; \big( 1-C_{K,N} (\delta+\varepsilon) \big)^{\frac{N-1}{N}}\;  \bar{\mm}(E)^{\frac{N-1}{N}}  \nonumber\\
&\; \geq   N \omega_{N}^{1/N}  \; \big(1-C_{K,N} (\delta+\varepsilon) \big)  \;   \bar{\mm}(E)^{\frac{N-1}{N}}.  \label{eq:CLaimfinal}
\end{align}
Recalling that $\qq(Q)=1$,  putting together \eqref{eq:MCmmq} and \eqref{eq:CLaimfinal}   yields
$$
\bar{\mm}^{+}(E)\geq   N \omega_{N}^{1/N}  \; \big(1-C_{K,N} (\delta+\varepsilon) \big)  \;   \bar{\mm}(E)^{\frac{N-1}{N}},
$$
which, combined with \eqref{eq:AssB} and  \eqref{eq:defbarmm}  gives the desired almost euclidean isoperimetric inequality
\begin{align}
\mm^{+}(E)&\geq  \mm(B_{\bar{r} + 2 \delta}(\bar{x}))^{1/N}  N \omega_{N}^{1/N}  \; \big(1-C_{K,N} (\delta+\varepsilon) \big)  \;   \mm(E)^{\frac{N-1}{N}},  \nonumber\\
&\geq N \omega_{N}^{1/N}  \; \big(1-C_{K,N} (\delta+\varepsilon+\eta) \big)  \;   \mm(E)^{\frac{N-1}{N}}. \nonumber
\end{align}

\end{document}